\makeatletter \@addtoreset{equation}{section} \makeatother
\let\old@startsection=\@startsection
\let\oldl@section=\l@section
\renewcommand{\@startsection}[6]{\old@startsection{#1}{#2}{#3}{#4}{#5}{#6\mathversion{bold}}}
\renewcommand{\l@section}[2]{\oldl@section{\mathversion{bold}#1}{#2}}
\let\old@makecaption=\@makecaption
\def\@makecaption{\small\old@makecaption}
\let\oldPhi=\Phi
\let\oldPsi=\Psi
\let\oldGamma=\Gamma
\let\oldDelta=\Delta
\let\oldSigma=\Sigma
\let\oldTheta=\Theta
\let\oldPi=\Pi
\let\oldUpsilon=\Upsilon
\renewcommand{\Phi}{\mathnormal{\oldPhi}}
\renewcommand{\Psi}{\mathnormal{\oldPsi}}
\renewcommand{\Gamma}{\mathnormal{\oldGamma}}
\renewcommand{\Sigma}{\mathnormal{\oldSigma}}
\renewcommand{\Delta}{\mathnormal{\oldDelta}}
\renewcommand{\Theta}{\mathnormal{\oldTheta}}
\renewcommand{\Pi}{\mathnormal{\oldPi}}
\renewcommand{\Upsilon}{\mathnormal{\oldUpsilon}}
\newcommand{\sign}{\mathop{\mathrm{sign}}}
\renewcommand{\Re}{\mathop{\mathrm{Re}}}
\renewcommand{\Im}{\mathop{\mathrm{Im}}}
\newlength{\apb@width}
\newcommand{\autoparbox}[2][c]{\settowidth{\apb@width}{#2}\parbox[#1]{\apb@width}{#2}}
\newcommand{\cG}{{\mathcal{G}}}
\def\mr@ignsp#1 {\ifx\:#1\@empty\else #1\expandafter\mr@ignsp\fi}%
\newcommand{\multiref}[1]{\begingroup
\xdef\mr@no@sparg{\expandafter\mr@ignsp#1 \: }%
\def\mr@comma{}%
\@for\mr@refs:=\mr@no@sparg\do{\mr@comma\def\mr@comma{,}\ref{\mr@refs}}%
\endgroup}
\newcommand{\hypref}[2]{\ifx\href\asklfhas #2\else\href{#1}{#2}\fi}
\renewcommand{\eqref}[1]{(\multiref{#1})}
\asklfhas\newcommand{\href}[2]{#2}\fi
\newcommand{\rd}{\mathrm{d}}
\def\beq{\begin{equation}}
\def\eeq{\end{equation}}
\def\bsp#1\esp{\begin{split}#1\end{split}}
\newtheorem{defi}{Definition}
\newtheorem{proposition}{Proposition}
\newtheorem{thm}{Theorem}
\newcommand{\smat}[2]{\left(\begin{smallmatrix} #1 \\ #2\end{smallmatrix}\right)}
\newcommand{\sumprimemn}{\sum_{{(m,n)\in\Z^2}}\!\!\!\!\!{\vphantom{\sum}}'\,\,\,}
\newcommand{\sumprimeumn}{\sum_{{u=(m,n)\in\Z^2}}\!\!\!\!\!\!\!\!{\vphantom{\sum}}'\,\,\,\,\,\,}
\newcommand{\sumprimev}[1]{\sum_{{(m,n)\equiv #1}}\!\!\!\!{\vphantom{\sum}}'\,\,\,}
\newcommand{\bigslant}[2]{\left.\raisebox{-.3em}{$#1$}\!\middle\backslash\!\raisebox{.1em}{$#2$}\right.}
\DeclareMathOperator{\Z}{\mathbb{Z}}
\DeclareMathOperator{\im}{im}
\DeclareMathOperator{\HH}{\mathfrak{H}}
\DeclareMathOperator{\C}{\mathbb{C}}
\DeclareMathOperator{\Q}{\mathbb{Q}}
\DeclareMathOperator{\PP}{\mathbb{P}}
\DeclareMathOperator{\SL}{SL}
\DeclareMathOperator{\Hom}{Hom}
\DeclareMathOperator{\Li}{Li}
\DeclareMathOperator{\Cl}{Cl}
\newcommand{\ul}[1]{\underline{#1}}
\newcommand{\bvert}{\big\vert}
\newcommand{\wt}[1]{\widetilde{#1}}
\begin{document}

\thispagestyle{empty}

\begin{flushright}\footnotesize
\texttt{BONN-TH-2025-03}\\
\end{flushright}
\vspace{.2cm}

\begin{center}%
{\LARGE\textbf{\mathversion{bold}%
Equivariant primitives of Eisenstein series for congruence subgroups}\par}

\vspace{1cm}
{\textsc{Claude Duhr${}^a$, Franca Lippert${}^a$ }}
\vspace{8mm} \\
\textit{%
${}^a$Bethe Center for Theoretical Physics, Universit\"at Bonn, D-53115, Germany\\[2pt]
}
\vspace{.5cm}

\texttt{cduhr@uni-bonn.de, flippert@uni-bonn.de}\\[2pt]
%

\par\vspace{15mm}

\textbf{Abstract} \vspace{5mm}

\begin{minipage}{13cm}

We study equivariant primitives of Eisenstein series for principal congruence subgroups and show that they are precisely the corresponding non-holomorphic Eisenstein series. We present closed formulas that naturally generalise existing results for the full modular group. We also focus on Eisenstein series of weight two in the case where the modular curve has genus zero. We show that in those cases the non-holomorphic Eisenstein series of weight two can be written as single-valued logarithms whose argument is a rational function of the Hauptmodul.

\end{minipage}
\end{center}

\newpage
\tableofcontents
\bigskip
\hrule



\section{Introduction}
\label{sec:intro}

Iterated integrals~\cite{ChenSymbol} have long played an important role in string theory and Quantum Field Theory (QFT). The arguably simplest incarnation of iterated integrals are multiple polylogarithms (MPLs)~\cite{Lappo:1927,Goncharov:1998kja,GoncharovMixedTate,Remiddi:1999ew}, which describe large classes of Feynman integrals in QFT (cf.,~e.g., refs.~\cite{Bourjaily:2022bwx,Abreu:2022mfk,Weinzierl:2022eaz} for recent reviews) as well as the low-energy expansion of string theory amplitudes at tree-level~\cite{Broedel:2013aza,Broedel:2013tta}. 
While ordinary MPLs are purely holomorphic multi-valued functions, it is possible to consider real-analytic analogues that share many of their properties, but in addition they are single-valued~\cite{BrownSVHPLs,brownSV,Brown:2013gia}. The latter also play an important role in physics. In QFT, they arise in the computation of a variety of multi-loop scattering amplitudes (cf.~e.g.,~refs.~\cite{Dixon:2012yy,Chavez:2012kn,Schnetz:2013hqa,Brown:2015ztw,DelDuca:2016lad,Almelid:2015jia,Caron-Huot:2017fxr,Dixon:2019lnw,Schnetz:2021ebf,Baune:2023uut}), and in string theory, building on the pioneering work of Kawai Lewellen and Tye (KLT)~\cite{Kawai:1985xq}, single-valued analogues of MPLs compute the coefficients in the low-energy expansion of tree-level closed string amplitudes~\cite{Stieberger:2013wea,Stieberger:2014hba,Zerbini:2015rss,Schlotterer:2018zce,Brown:2019wna,Baune:2024uwj}.

MPLs are closely related to the geometry of the Riemann sphere with punctures, and it was realised over the last decade that iterated integrals associated with more complicated geometries also appear in physics. The simplest geometry beyond the Riemann sphere is the torus, which naturally arises in string theory from the computation of one-loop string amplitudes. It was observed that the low-energy expansion of one-loop open string amplitudes involves iterated integrals over Eisenstein series for the full modular group $\SL_2(\Z)$~\cite{Broedel:2015hia}. The same class of functions also appears in QFT in the computation of multi-loop Feynman integrals~\cite{Adams:2017ejb,Broedel:2018iwv}. However, a main difference between string and field theory is that, while string theory naturally leads one to consider modular transformations for the full modular group $\SL_2(\Z)$, in QFT one encounters Eisenstein series for certain congruence subgroups (although Eisenstein series for congruence subgroups may arise from string theory, cf.,~e.g.,~ref.~\cite{Broedel:2017jdo}).

Similarly to the situation at tree-level, one-loop closed string amplitudes give rise to non-holomorphic modular forms for $\SL_2(\Z)$ known as modular graph functions~\cite{Green:1999pv,DHoker:2015gmr,DHoker:2015wxz,DHoker:2016mwo,Broedel:2018izr,Zerbini:2018hgs,Vanhove:2018elu,Zagier:2019eus}, which include in particular the well-known non-holomorphic Eisenstein series for $\SL_2(\Z)$. The latter can be considered within the general mathematical framework of equivariant iterated Eisenstein integrals introduced by Brown~\cite{Brown2017ACO,brown_2020,Brown:2018ut}. These are a class of real-analytic functions defined by combinations of holomorphic and anti-holomorphic iterated integrals of Eisenstein series with well-defined transformation properties under the modular group~\cite{Brown:2018ut,brown_2020,Brown2017ACO}. Brown has shown that the non-holomorphic Eisenstein series for $\SL_2(\Z)$ are precisely the equivariant primitives of holomorphic Eisenstein series. Given their prominent appearance in both string and number theory, equivariant iterated Eisenstein integrals for $\SL_2(\Z)$ have seen a lot of attention over the last few years, both in physics~\cite{Gerken:2018jrq,Gerken:2019cxz,Gerken:2020yii,Dorigoni:2021ngn,Dorigoni:2021jfr,Dorigoni:2022npe,Dorigoni:2024oft,Dorigoni:2024iyt} and in mathematics~\cite{Diamantis:2020aa,DREWITT202278,DREWITT20251,Bonisch:2024nru}.

Given the close connection between the number theoretical content of string theory and QFT amplitudes, it is natural to expect that real-analytic equivariant iterated integrals of Eisenstein series may also play a role in QFT computations, though so far they have not yet been observed. One reason for this situation is that, unlike for the full modular group $\SL_2(\Z)$, equivariant iterated integrals of Eisenstein series for congruence subgroups are comparatively poorly studied in the literature, with results only for the congruence subgroups $\Gamma_0(N)$~\cite{DREWITT20251}. The aim of this paper is to take first steps towards understanding equivariant iterated Eisenstein integrals for congruence subgroups. We focus on iterated integrals of length one, i.e., equivariant primitives of Eisenstein series, for the principal congruence subgroups. The latter are sufficient to recover results for arbitrary congruence subgroups $\Gamma$. We show that Brown's results for equivariant primitives of Eisenstein series for the full modular group have a natural generalisation to all congruence subgroups. In particular, we show that the equivariant primitives of Eisenstein series are precisely the non-holomorphic Eisenstein series, and we present explicit formulas. A novel feature is the appearance of Eisenstein series of odd weight and of weight two. We show that for congruence subgroup of genus zero equivariant primitives of Eisenstein series can be expressed as single-valued logarithms whose argument is a rational function of the Hautmodul for $\Gamma$.

Our paper is organised as follows: in section~\ref{sec:review} we present some background material on Eisenstein series for congruence subgroups, and in section~\ref{sec:L-series} we present a closed formula for the $L$-series attached to Eisenstein series for the principal congruence subgroup $\Gamma(N)$ in terms of Clausen's function, which generalises a result by Brown for the full modular group in terms of zeta values. In section~\ref{sec:cocycles} we compute the cocycles of Eisenstein series for $\Gamma(N)$. In section~\ref{sec:equiv_prim} we present the main result of our paper. We determine the equivariant primitives of Eisenstein series for $\Gamma(N)$ and we show that they are precisely given by the non-holomorphic Eisenstein series. In section~\ref{sec:weight2} we focus on congruence subgroups of genus zero, and we show that the corresponding non-holomorphic Eisenstein series can be written as single-valued logarithms. Finally, in section~\ref{sec:conclusion} we draw our conclusions.


\section{Eisenstein series for congruence subgroups}
\label{sec:review}

\subsection{Modular forms}
\label{sec:modular}

Let $N>0$ be an integer. Throughout this paper $\Gamma$ denotes a congruence subgroup of level $N$, i.e., a finite-index subgroup of the modular group $\SL_2(\Z)$ which contains as a subgroup the \emph{principal congruence subgroup} 
\beq
\Gamma(N) = \big\{\gamma \in \SL_2(\Z): \gamma = \mathds{1}\!\!\!\mod N\big\}\,.
\eeq
The modular group $\SL_2(\Z)$ is generated by $-\mathds{1}$ and the two matrices
\beq\label{eq:SL2_gens}
S := \smat{0&-1}{1&0}\textrm{~~~and~~~} T := \smat{1&1}{0&1}\,.
\eeq
It acts on the upper half-plane $\HH:=\big\{\tau\in \C: \Im\tau>0\big\}$ and $\PP^1(\Q)$ via M\"obius transformations 
\beq
\gamma \tau := \tfrac{a\tau+b}{c\tau+d}\,,\qquad \tau\in\HH\cup\PP^1(\Q)\,, \qquad\gamma= \smat{a&b}{c&d} \in \SL_2(\Z)\,.
\eeq
This action partitions $\PP^1(\Q)$ into disjoint $\Gamma$-orbits, called the \emph{cusps} of $\Gamma$. The action of the modular group on $\HH$ induces an action on functions on $\HH$. For $f:\HH\to\C$, we define the action of weight $k$ by 
\beq
(f[\gamma]_k)(\tau) = (c\tau+d)^{-k}\,f(\gamma\tau)\,,\qquad \gamma = \smat{a&b}{c&d}\in\SL_2(\Z)\,.
\eeq
\begin{defi}
A weakly modular function of weight $k$ for $\Gamma$ is a meromorphic function {$f:\HH\to\C$} such that $f[\gamma]_k=f$ for all $\gamma\in\Gamma$.
\end{defi}
Note that always $T^N\in\Gamma$, and so if $f$ is a weakly modular function of weight $k$, then $f$ is invariant under translations by $N$,
\beq
f(\tau) = (f[T^N]_k)(\tau) = f(\tau+N)\,.
\eeq
It follows that $f$ admits a Fourier expansion of the form
\beq\label{eq:Fourier_general}
f(\tau) = \sum_{n\in\Z} a_n\,q_N^n\,,\qquad q_N:= e^{2\pi i\tau/N}\,.
\eeq
\begin{defi}
A modular form of weight $k$ for $\Gamma$ is a holomorphic function $f:\HH\to \C$ that is a weakly modular function of weight $k$ for $\Gamma$ and that is holomorphic at all cusps, i.e., for all $\gamma\in\SL_2(\Z)$, $f[\gamma]_k$ admits a Fourier expansion with $a_{-n}=0$ for $n\ge 1$. 
\end{defi}

\subsection{Eisenstein series}
\label{sec:eisenstein}
Let us denote by $M_k(\Gamma)$ the $\C$-vector space of modular forms of weight $k$. A cusp form of weight $k$ for $\Gamma$ is a modular form which vanishes at all cusps (meaning, that the Fourier expansion close to every cusp has vanishing constant term $a_0$). Cusp forms span a subspace $S_k(\Gamma)$ of $M_k(\Gamma)$. $M_k(\Gamma)$ can be equipped with the Peterssen inner product, and the orthogonal complement of $S_k(\Gamma)$ in $M_k(\Gamma)$ is called the Eisenstein subspace $E_k(\Gamma)$. 

It is possible to write down explicit generators for $E_k(\Gamma)$. In particular, for $\Gamma=\SL_2(\Z)$ the full modular group, $M_k(\Gamma)$ is trivial for $k=2$ and for $k$ odd, and for $k>3$ even the Eisenstein subspace is one-dimensional and generated by the Eisenstein series
\beq\label{eq:SL2Eisenstein}
G_k(\tau) := \sumprimemn \frac{1}{(m\tau+n)^k}\,,
\eeq
where the prime indicates that we exclude $(m,n) = (0,0)$ from the sum.
$G_k(\tau)$ has the Fourier expansion
\beq\label{eq:Gk_Fourier}
G_k(\tau) = 2\zeta_k + 2\,\frac{(2\pi i)^k}{(k-1)!}\sum_{n=1}^\infty \sigma_{k-1}(n)\,q_1^n\,,
\eeq
with $\zeta_k = \zeta(k)$ the  Riemann zeta function $\zeta(z) = \sum_{n=1}^\infty n^{-z}$ at $z=k$, and the coefficients are given by the divisor sums
\beq
\sigma_{k-1}(n) = \sum_{\substack{m|n \\ m>0}}m^{k-1}\,.
\eeq

Let us describe a spanning set for $E_k(\Gamma(N))$ for $k>2$. We comment on the cases where $k=2$ or where $\Gamma\neq\Gamma(N)$ below. Let $v\in\Z_N^2$ with $\Z_N := \Z/N\Z$. We define the Eisenstein series
\beq\label{eq:GNEisenstein}
G_k^v(\tau) = \sumprimev{v}\frac{1}{(m\tau+n)^k}\,,
\eeq
where the sum runs over all pairs $(m,n)\in\Z^2\setminus\{(0,0)\}$ congruent to $v$ modulo $N$. Note that for $v=(0,0)$ eq.~\eqref{eq:GNEisenstein} reduces to eq.~\eqref{eq:SL2Eisenstein}, 
\beq\label{eq:G_00}
G^{(0,0)}_k(\tau) = \frac{1}{N^k}\,G_k(\tau)\,.
\eeq
One can show that each $G_k^v(\tau) $ is a modular form of weight $k\ge3$ for $\Gamma(N)$ and that $\big\{G_k^v(\tau):v\in \Z_N^2\big\}$ is a spanning set for $E_k(\Gamma(N))$.

$E_k(\Gamma(N))$ carries a representation of the full modular group $\SL_2(\Z)$,
\beq
G_k^v[\gamma]_k = G_k^{v\gamma}\,,\qquad \gamma\in\SL_2(\Z)\,.
\eeq
We can write $v=(c,d)$ with $0\le c,d<N$.
The Fourier expansion of $G_k^v$ is then given by
\beq\label{eq:Gkv_Fourier}
G_k^v(\tau)  = \delta_{c,0}\,\zeta_k^d + \frac{(-2\pi i)^k}{(k-1)!N^k}\sum_{n=1}^\infty\sigma_{k-1}^v(n)q_N^n\,,
\eeq
with $\delta_{c,0}$ the Kronecker $\delta$ function and
\beq
\zeta_k^d = \sum_{\substack{n=d\!\!\!\!\mod \!N\\n\neq 0}}\frac{1}{n^k}\,.
\eeq
The coefficients are given by the generalised divisor sum
\beq
\sigma_{k-1}^v(n) = \sum_{\substack{m|n\\ \tfrac{n}{m} = c\!\!\!\!\mod\!N}}\sign(m)\,m^{k-1}\,\mu_N^{dm}\,,
\eeq
where $\mu_N := e^{2\pi i/N}$ is a primitive $N^{\textrm{th}}$ root of unity. It is easy to check that for $N=1$ and $k$ even, eq.~\eqref{eq:Gkv_Fourier} reduces to eq.~\eqref{eq:Gk_Fourier}.

For $k=2$, the Eisenstein series in eqs.~\eqref{eq:SL2Eisenstein} and~\eqref{eq:GNEisenstein} are not absolutely convergent, and they do not define modular forms. It is possible to find linear combinations of Eisenstein series of weight 2 that are modular. Specifically, we have
\beq\label{eq:weight2_span}
E_2(\Gamma(N)) = \Big\{\sum_{v\in\Z_N} a_v\,G_2^v: a_v \in \C \textrm{~~and~~}  \sum_{v\in\Z_N} a_v=0\Big\}
\eeq
If for some fixed $v_0\in \Z_N^2$, we define
\beq\label{eq:g2v_def}
g_2^{v} := G_2^v - G_2^{v_0}\,,
\eeq
then $\big\{g_2^v : v\in \Z_N\setminus\{v_0\}\big\}$ is a spanning set for $E_2(\Gamma(N))$. For example, we may pick $v_0$ to be $(0,0)\!\!\mod N$, in which case $G_2^{v_0}$ is given by eq.~\eqref{eq:G_00}. Later on, it will be more convenient to choose $v_0=(0,1)\!\!\mod N$.

Finally, let us comment on congruence subgroups $\Gamma$ other than $\Gamma(N)$. The principal congruence subgroup $\Gamma(N)$ is normal in $\Gamma$. Let $\gamma_1,\ldots, \gamma_r$ be a set of coset representatives of the quotient $\bigslant{\Gamma(N)}{\Gamma}$. Then, for $k>2$, a spanning set for $E_k(\Gamma)$ is given by
\beq
\Big\{\sum_{i=1}^r G_{k}^{v\gamma_i}: v\in \Z_N^2\Big\}\,.
\eeq
For $k=2$, we obtain a spanning set by replacing $G_{2}^{v\gamma_i}$ by $g_{2}^{v\gamma_i}$. Since it is always possible to construct a spanning set for $E_k(\Gamma)$ out of Eisenstein series for $\Gamma(N)$, we will only discuss principal congruence subgroups in the following, but all results can easily be extended to general congruence subgroups. 

\subsection{An alternative spanning set of Eisenstein series}
\label{sec:alternative_basis}
It will be useful to work with an alternative spanning set of Eisenstein series introduced in refs.~\cite{Heumann2014,Broedel:2018iwv}. For two vectors $u=(m,n)$ $v=(c,d)$ in $\Z^2$, we define the $\SL_2(\Z)$-invariant symplectic bilinear pairing
\beq\label{eq:symp_pair}
(u|v) := -u^TSv = md-nc\,.
\eeq
For $v\in\Z_N^2$ and $k\ge 2$, we define the series
\beq
H_k^v(\tau) := \sumprimeumn\frac{\mu_N^{(u|v)}}{(m\tau+n)^k}\,,
\eeq
Since the bilinear pairing in eq.~\eqref{eq:symp_pair} is $\SL_2(\Z)$-invariant, it is easy to check that the $H_k^v$ with $k>2$ are modular forms of weight $k$. $H_2^v$ is modular for $v\neq (0,0)\!\!\mod N$, and we have $H_2^{(0,0)} = G_2$.

\begin{proposition}\label{prop:HG_convert}
The set $\big\{H_k^v: v\in \Z_N^2\big\}$ is a spanning set for $E_k(\Gamma(N))$ with $k\ge 2$ (where for $k=2$ we omit the case $v=(0,0)\!\!\mod N$). Moreover, we have the relations:
\begin{align}
H_k^v(\tau) &\,= \sum_{u\in\Z_N^2}\mu_N^{(u|v)}\,G_k^{u}(\tau)\,,\\
\label{eq:HG_convert}G_k^v(\tau) &\,= \frac{1}{N^2}\sum_{u\in\Z_N^2}\mu_N^{-(u|v)}\,H_k^{u}(\tau)\,,\\
\label{eq:Hg_convert}g_2^v(\tau) &\,= \frac{1}{N^2}\sum_{u\in\Z_N^2\setminus\{(0,0)\}}\left(\mu_N^{-(u|v)}-\mu_N^{-(u|v_0)}\right)\,H_2^{u}(\tau)\,.
\end{align}
\end{proposition}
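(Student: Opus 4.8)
The plan is to recognise the two families $\{G_k^v\}$ and $\{H_k^v\}$ as a finite Fourier transform pair on the abelian group $\Z_N^2$, with the symplectic pairing $(u|v)$ of eq.~\eqref{eq:symp_pair} supplying the additive characters $u\mapsto\mu_N^{(u|v)}$. Once the two conversion relations are established, the spanning statement for $k\ge 3$ is immediate: the transform is invertible, so $\{H_k^v\}$ and $\{G_k^v\}$ have the same linear span, which is $E_k(\Gamma(N))$ by the result quoted below eq.~\eqref{eq:G_00}.

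To prove the first relation $H_k^v=\sum_u\mu_N^{(u|v)}G_k^u$, I would partition the lattice sum defining $H_k^v$ according to the residue class $u\in\Z_N^2$ of the summation vector $w=(m,n)$ modulo $N$. Since $(w|v)$ is bilinear with integer values, $\mu_N^{(w|v)}$ depends only on $w\bmod N$, so the phase factors out of each residue class and the inner sum reproduces $G_k^u$ by its definition eq.~\eqref{eq:GNEisenstein}. For $k\ge 3$ absolute convergence makes this rearrangement rigorous; for $k=2$ the same manipulation holds at the level of the regularised (Hecke/Eisenstein-summed) series, and the finite reshuffling over $\Z_N^2$ commutes with the regularisation.

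For the inversion eq.~\eqref{eq:HG_convert} I would invoke orthogonality of the additive characters, $\sum_{v\in\Z_N^2}\mu_N^{(u|v)}=N^2\,\delta_{u,0}$, which follows from the non-degeneracy of the symplectic pairing modulo $N$ (so that $v\mapsto\mu_N^{(\cdot|v)}$ exhausts the characters of $\Z_N^2$). Multiplying the first relation by $\mu_N^{-(w|v)}$ and summing over $v$ then isolates a single $G_k^w$. The step I expect to need the most care is the bookkeeping of signs: the pairing is antisymmetric, $(u|v)=-(v|u)$, and both families obey the reflection relation $H_k^{-v}=(-1)^k H_k^v$, $G_k^{-v}=(-1)^k G_k^v$, obtained from $w\mapsto -w$ in the lattice sum. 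Tracking these carefully is what fixes the placement of the minus sign in the exponent and the overall normalisation, and it is precisely here that the odd-weight case departs from the familiar even-weight one.

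For eq.~\eqref{eq:Hg_convert} I would substitute the $k=2$ inversion into the definition $g_2^v=G_2^v-G_2^{v_0}$; the two constant ($u=(0,0)$) contributions cancel because $(u|v)$ and $(u|v_0)$ both vanish there, so the non-modular term $H_2^{(0,0)}=G_2$ drops out automatically, the sum is restricted to $u\neq(0,0)$, and $g_2^v\in E_2(\Gamma(N))$. For the spanning statement at $k=2$ I would prove both inclusions: each $H_2^u$ with $u\neq(0,0)$ is a combination $\sum_w\mu_N^{(w|u)}G_2^w$ whose coefficients sum to $N^2\delta_{u,0}=0$, hence is modular and lies in $E_2(\Gamma(N))$ by the characterisation eq.~\eqref{eq:weight2_span}; conversely the $g_2^v$, which already span $E_2(\Gamma(N))$, are expressed through the $H_2^u$ by eq.~\eqref{eq:Hg_convert}, so the $\{H_2^u:u\neq(0,0)\}$ span exactly $E_2(\Gamma(N))$, with $H_2^{(0,0)}=G_2$ necessarily omitted as the sole non-modular member.
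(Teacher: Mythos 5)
Your proposal follows essentially the same route as the paper: partition the lattice sum defining $H_k^v$ over residue classes modulo $N$ to obtain $H_k^v=\sum_u\mu_N^{(u|v)}G_k^u$, invert via orthogonality of the characters $u\mapsto\mu_N^{(u|v)}$ of $\Z_N^2$ (which the paper phrases as the matrix identity $AB=N^2\mathds{1}$ derived from eq.~\eqref{eq:proofel_AB}), and note that the $u=(0,0)$ term drops out of $g_2^v=G_2^v-G_2^{v_0}$ because $(u|v)=(u|v_0)=0$ there. The sign bookkeeping you flag is genuinely the one delicate point --- the antisymmetry of the pairing means a careless inversion returns $G_k^{-v}=(-1)^kG_k^v$ instead of $G_k^v$, which only matters for odd $k$ --- and your prescription of multiplying by $\mu_N^{-(w|v)}$ and summing over $v$ resolves it correctly.
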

\begin{proof}
We have
\beq\label{eq:proofel_AB_1}
H_k^v(\tau) = \sum_{u\in \Z_N^2}\sumprimev{u}\frac{\mu_N^{(u|v)}}{(m\tau+n)^k}=\sum_{u\in \Z_N^2}\mu_N^{(u|v)}\sumprimev{u}\frac{1}{(m\tau+n)^k} = \sum_{u\in \Z_N^2}\mu_N^{(u|v)}\,G_k^u(\tau)\,.
\eeq
This shows that $H_k^v\in E_k(\Gamma(N))$ for $k>2$. Since 
\beq\label{eq:proofel_AB}
 \sum_{u\in \Z_N^2}\mu_N^{(u|v)} = \left\{\begin{array}{ll} N^2\,, & \textrm{ if }v = (0,0)\!\!\!\mod N\,,\\
 0\,,& \textrm{ otherwise,}
 \end{array}\right.
 \eeq
 we see that for $v\neq (0,0)\!\!\!\mod N$, $H_2^v$ lies in the set in eq.~\eqref{eq:weight2_span}, and so $H_2^v\in E_2(\Gamma(N))$. 
 
 Consider the matrices
 \beq
 A := \Big(\mu_N^{(u|v)}\Big)_{v,u\in\Z_N^2} \textrm{~~~and~~~}  B := \Big(\mu_N^{-(u|v)}\Big)_{v,u\in\Z_N^2} \,.
 \eeq
 Then eq.~\eqref{eq:proofel_AB} implies $AB = N^2\mathds{1}$, and so $A^{-1}=N^{-2}B$. Equation~\eqref{eq:proofel_AB_1} can be cast in the form 
 \beq
 H_k^v = \sum_{u\in\Z_N^2}A_{v,u}G_k^u\,,
 \eeq
 and so
 \beq
G_k^v = \sum_{u\in\Z_N^2}(A^{-1})_{v,u}\,H_k^u = \frac{1}{N^2} \sum_{u\in\Z_N^2}B_{v,u}\,H_k^u\,.
 \eeq
From here eq.~\eqref{eq:HG_convert} follows. For $k>3$, the $G_k^v$ are a spanning set of $E_k(\Gamma(N))$, and since eq.~\eqref{eq:HG_convert} implies that all $G_k^v$ are linear combinations of the $H_k^v\in E_k(\Gamma(N))$, the latter also form a spanning set. For $k=2$, we have 
 \beq\bsp
 g_2^v &\,= G_2^v - G_2^{v_0} = \frac{1}{N^2}\sum_{u\in\Z_N^2}\left(\mu_N^{-(u|v)}-\mu_N^{-(u|v_0)}\right)\,H_2^{u}(\tau) \\
 &\,= \frac{1}{N^2}\sum_{u\in\Z_N^2\setminus\{(0,0)\}}\left(\mu_N^{-(u|v)}-\mu_N^{-(u|v_0)}\right)\,H_2^{u}(\tau)\,,
\esp\eeq
because $(u|v)=(u|v_0)=0$ for $u=(0,0)$.
Hence $\big\{H_2^{v}:v\in \Z_N^2\setminus\{(0,0)\}\big\}$ is a spanning set for $E_2(\Gamma(N))$.
\end{proof}

It will be useful to consider the normalised series
\beq\label{eq:xi_def}
\xi_k^v := \frac{(k-1)!}{(2\pi i)^k}\,H_k^v\,.
\eeq
Combining Proposition~\ref{prop:HG_convert} with the Fourier expansion in eq.~\eqref{eq:Gkv_Fourier}, we can obtain the Fourier expansions of the $\xi_k^v$ (with $v=(a,b)\in\Z^2$),
\beq\bsp\label{eq:xi_Fourier}
\xi_k^v(\tau)=-\frac{1}{k}B_k\left(\tfrac{a}{N}\right)&+\sum_{\substack{m=a\!\!\!\mod N\\ m>0}}\left(\frac{m}{N}\right)^{k-1}\,\sum_{n=1}^\infty \mu_N^{nb}\,q_N^{mn}\\
&+(-1)^k\sum_{\substack{m=-a\!\!\!\mod N\\ m>0}}\left(\frac{m}{N}\right)^{k-1}\,\sum_{n=1}^\infty \mu_N^{-nb}\,q_N^{mn}\,,
\esp\eeq
where the Bernoulli polynomials are defined by the generating series
\beq
\frac{t\,e^{xt}}{e^t-1} = \sum_{n=0}^{\infty}B_n(x)\,\frac{t^n}{n!}\,.
\eeq

\subsection{Integrals of Eisenstein series}

Consider a modular form $f\in M_k(\Gamma)$. A {primitive} of $f$ is a real-analytic function $F:\HH\to \C$ such that $\partial_{\tau}F(\tau) = f(\tau)$. We can of course easily construct a holomorphic primitive by integrating $f$ along a path in the upper half-plane. Since $f$ is holomorphic, the details of the path are immaterial, and the value of the integral only depends on the endpoints of the path. A holomorphic primitive of $f$ is thus given by
\beq\label{eq:gen_primitive}
F(\tau) = \int_{\tau}^{\tau_0}\rd z\,f(z)\,,
\eeq
where $\tau_0$ is a fixed point in the upper half plane. Since $f$ is holomorphic on $\HH$, the integral is absolutely convergent for all values of $\tau$ and $\tau_0$. Note that, even though $f$ is a modular form, $F$ will in general not be modular. In ref.~\cite{Brown:2018ut} Brown showed how to construct real-analytic modular primitives of Eisenstein series for the full modular group $\SL_2(\Z)$. In ref.~\cite{DREWITT20251} a generalisation to the congruence subgroups $\Gamma_0(N)$ was presented. One of the goals of this paper is present the generalisation of the construction of ref.~\cite{Brown:2018ut} to $\Gamma(N)$, from which results for all other congruence subgroups can be obtained. 

In the following it will be useful to take $\tau_0$ not to lie in the upper half-plane $\HH$, but to be a cusp, $\tau_0\in \Q\cup\{i\infty\}$. If the Fourier expansion of $f$ close to the cusp $\tau_0$ has a non-vanishing constant term, then the integral in eq.~\eqref{eq:gen_primitive} diverges. In ref.~\cite{Brown:2018ut} it was explained how one can replace the integral in eq.~\eqref{eq:gen_primitive} by a suitably regularised version, by replacing the cusp $\tau_0$ by a tangential base point. The regularisation of ref.~\cite{Brown:2018ut} works for iterated integrals of arbitrary length. For details we refer to ref.~\cite{Brown:2018ut}. Here we only need the special case of ordinary integrals, which we describe in the following. We assume without loss of generality that $\tau_0=i\infty$ (because all cusps are $\SL_2(\Z)$-equivalent to $i\infty$), and we assume that the Fourier expansion of $f$ has the form
\beq
f(\tau) = a_0 + \tilde{f}(\tau) = a_0 + \sum_{n=1}^\infty a_n q_N^n\,.
\eeq
For $0\le p\le k-2$, the regulated integral from the unit tangent vector $\vec{1}_{\infty}$ at the infinite cusp is defined by~\cite{Brown:2018ut}
\beq\bsp\label{eq:regularisation}
\int_{\tau}^{\vec{1}_{\infty}}\!\!\rd z\, f(z)\, z^p &\,:= \int_{\tau}^{0}\rd z\,a_0 z^{p} + \int_{\tau}^{i\infty}\rd z\,\tilde{f}(z)\,z^p\\
 &\,\phantom{:}= -\frac{a_0\,\tau^{p+1}}{p+1} + \int_{\tau}^{i\infty}\rd z\,\tilde{f}(z)\,z^p\,.
\esp\eeq
The last integral is absolutely convergent, because $\tilde{f}$ vanishes at the infinite cusp.


\section{The $L$-series of Eisenstein series}
\label{sec:L-series}

To an Eisenstein series $f$ of weight $k$ with Fourier expansion as in eq.~\eqref{eq:Fourier_general}, we can associate an $L$-series
\beq
L(f,s) := \sum_{n=1}^{\infty}\frac{a_n}{n^s}\,,\qquad s\in \C\,.
\eeq
The series converges for $\Re(s)>k$, but it can be analytically continued. The \emph{completed $L$-function} is defined by
\beq\bsp\label{eq:Lambda_def}
\Lambda(f,s) &\,:= \int_{\vec 1_0}^{\vec{1}_{\infty}}\rd\tau\,f(\tau)\,\tau^{s-1}
=\frac{N^s\,\Gamma(s)}{(-2\pi i)^s}\,L(f,s)\,.
\esp\eeq
Here $\vec{1}_0$ is the unit tangent vector at the cusp $\tau=0$, required to regulate potential divergencies at that cusp.

We now give a closed form for the completed $L$-series of the Eisenstein series $H_k^v$ in terms of Clausen values and Bernoulli polynomials. The Clausen values are related to the real and imaginary parts of the polylogarithm function,
\beq
\Cl_m(x) = \mathfrak{R}_m\left(\Li_m\big(e^{ix}\big)\right)\,,\qquad \Li_m(z) = \sum_{k=1}^\infty\frac{z^k}{k^m}\,.
\eeq
where $\mathfrak{R}_m$ denotes the real part for $m$ odd and the imaginary part for $m$ even. 

\begin{thm}For $k>1$, $1\le l< k$ and $\kappa = k\!\!\mod 2$ with $0\le \kappa\le 1$, we have:
\begin{itemize}
\item if $a=0$ and $l=k-1$: 
\beq
\Lambda\big(H_k^{(0,b)},k-1\big) = \frac{2\pi i}{k-1}\, i^\kappa\,(-1)^{k}\,\Cl_{k-1}\left(\tfrac{2\pi b}{N}\right)\,,
\eeq 
\item if $b=0$ and $l=1$: 
\beq
\Lambda\big(H_k^{(a,0)},1\big) = -\frac{2\pi i}{k-1}\,i^\kappa\,\Cl_{k-1}\left(\tfrac{2\pi a}{N}\right)\,,
\eeq
\item in all other cases: 
\beq\Lambda\big(H_k^{(a,b)},l\big) = \frac{(2\pi i)^k}{(k-1)!}\,(-1)^{l}\frac{B_{k-l}\left(\tfrac{a}{N}\right)}{k-l}\,\frac{B_{l}\left(\tfrac{b}{N}\right)}{l}\,.
\eeq
\end{itemize}
\end{thm}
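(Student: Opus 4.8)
The plan is to obtain $\Lambda(H_k^v,l)$ by first computing the Dirichlet series $L(H_k^v,s)$ explicitly, exploiting the fact that it factorises into a Hurwitz-type zeta in one summation index and a polylogarithm in the other, analytically continuing both factors, and evaluating at the critical integer $s=l$. Since the stated relation $\Lambda(H_k^v,s)=\frac{N^s\Gamma(s)}{(-2\pi i)^s}L(H_k^v,s)$ has a regular prefactor $\Gamma(l)$ for $1\le l<k$, and the extra polar terms coming from the constant terms at the two cusps sit at $s=0$ and $s=k$, it suffices to control $L(H_k^v,l)$.

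First I would read off the Fourier coefficients of $H_k^v=\frac{(2\pi i)^k}{(k-1)!}\,\xi_k^v$ from the expansion~\eqref{eq:xi_Fourier}, writing the Fourier index as $j=mn$. For $\Re(s)>k$ the double sum factorises as
\beq
L(H_k^v,s)=\frac{(2\pi i)^k}{(k-1)!\,N^{k-1}}\Big[\phi_a(s{-}k{+}1)\,\Li_s(\mu_N^b)+(-1)^k\,\phi_{-a}(s{-}k{+}1)\,\Li_s(\mu_N^{-b})\Big]\,,
\eeq
with $\phi_a(w):=\sum_{m\equiv a,\,m>0}m^{-w}=N^{-w}\zeta(w,\langle a/N\rangle)$ a Hurwitz zeta whose argument is the representative of $a/N$ in $(0,1]$. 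Both factors continue meromorphically: $\phi_a$ has its only pole at $w=1$, i.e.\ $s=k$, and $\Li_s(\mu_N^{\pm b})$ is entire unless $b\equiv 0$, in which case it equals $\zeta(s)$ with a pole at $s=1$. Because $1\le l<k$, the Hurwitz pole never interferes, so the only genuine obstruction is the $\zeta(s)$ pole, which can matter only when $b\equiv 0$ and $l=1$.

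For the generic case I would set $s=l$ using $\zeta(-n,\alpha)=-B_{n+1}(\alpha)/(n+1)$, the reflection $B_n(1-x)=(-1)^nB_n(x)$ to relate the $m\equiv a$ and $m\equiv -a$ sums, and the identity $\Li_l(e^{2\pi i x})+(-1)^l\Li_l(e^{-2\pi i x})=-\frac{(2\pi i)^l}{l!}B_l(x)$ to collapse the polylogarithms. The two signs $(-1)^k$ (from the structure of $\xi_k^v$) and $(-1)^{k-l}$ (from the reflection) combine to exactly $(-1)^l$, so the bracket becomes the Bernoulli combination and, after multiplying by $\frac{N^l\Gamma(l)}{(-2\pi i)^l}$, one recovers the product formula. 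I would then remark that the apparently borderline subcases $a\equiv 0$ with $l\ne k-1$, and $b\equiv 0$ with $l$ even, are in fact covered by the same formula, because there the surviving sign is $(-1)^k$ but the relevant odd-index Bernoulli number or the value $\zeta$ at a negative even integer vanishes, so both expressions reduce to the same (often zero) value.

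The two genuinely special cases are precisely where one summation degenerates so that the surviving sign is $(-1)^k$ instead of $(-1)^l$. For $a\equiv 0$, $l=k-1$ the two $m$-sums coincide and give $\zeta(0)=-\tfrac12$, while the polylogarithm combination carries the sign $(-1)^k=-(-1)^{k-1}$; this is the \emph{difference} $\Li_{k-1}(e^{i\theta})-(-1)^{k-1}\Li_{k-1}(e^{-i\theta})=2\,i^\kappa\,\Cl_{k-1}(\theta)$ with $\theta=2\pi b/N$, producing the stated Clausen value. For $b\equiv 0$, $l=1$ the two $n$-sums coincide and the $\zeta(s)$ pole at $s=1$ is cancelled by the vanishing of the Hurwitz-zeta bracket there; rather than extract the finite part directly, I would derive the functional equation $\Lambda(H_k^{(a,b)},s)=(-1)^s\,\Lambda(H_k^{(b,-a)},k-s)$ from $H_k^v[S]_k=H_k^{vS}$ via $\tau\mapsto-1/\tau$ in the regularised integral, and use it to reduce this case to the previous one, $\Lambda(H_k^{(a,0)},1)=-\Lambda(H_k^{(0,-a)},k-1)$, together with $\Cl_{k-1}(-\theta)=(-1)^k\Cl_{k-1}(\theta)$. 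The main obstacle is exactly this last case: one must either control the regularisation under $\tau\mapsto-1/\tau$ so that the tangential base points at $0$ and $\infty$ are exchanged with the correct phase (justifying the functional equation at the integer point $s=1$), or, taking the direct route, show that the non-Clausen pieces cancel in the derivative combination $\zeta'(2-k,a/N)+(-1)^k\zeta'(2-k,1-a/N)$ obtained from the Hurwitz functional equation. Everything else is bookkeeping of signs and powers of $2\pi i$, guided by the consistency check that the generic product formula already satisfies $\Lambda(H_k^{(a,b)},l)=(-1)^l\Lambda(H_k^{(b,-a)},k-l)$.
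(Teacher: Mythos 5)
Your proposal is correct and follows essentially the same route as the paper's proof: factorising $L(\xi_k^v,l)$ (equivalently $L(H_k^v,l)$) into a Hurwitz zeta times a polylogarithm via the Fourier expansion \eqref{eq:xi_Fourier}, collapsing the generic case with $\zeta(-n,\alpha)=-B_{n+1}(\alpha)/(n+1)$ and the polylogarithm inversion relation, identifying the $(a,l)=(0,k-1)$ case with a Clausen value, and reducing the $(b,l)=(0,1)$ case to it through the functional equation $\Lambda(\xi_k^v,l)=(-1)^l\Lambda(\xi_k^{vS},k-l)$. The one point you flag as the main obstacle --- justifying that functional equation at the integer point with the tangential-base-point regularisation --- is also left unproved in the paper, so your treatment matches it in both structure and level of detail.
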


\begin{proof}
We only discuss the case $k>3$. The case $k=2$ is similar. In the following we compute $L\big(\xi^v_k,l\big)$, which is proportional to $L\big(H^v_k,l\big)$, cf.~eq.~\eqref{eq:xi_def}. $\Lambda\big(H^v_k,l\big)$ can then be recovered via eq.~\eqref{eq:Lambda_def}.
Using the Fourier expansion from eq.~\eqref{eq:xi_Fourier}, we find eq.~\eqref{eq:Lambda_def},
\begin{align}
\nonumber L\big(\xi^v_k,l\big) &\, = \!\!\!\!\sum_{\substack{m=a\!\!\!\!\mod \!N\\m>0}}\sum_{n=1}^\infty\left(\frac{m}{N}\right)^{k-1}\,(mn)^{-l}\,\mu_N^{nb} 
+(-1)^k\!\!\!\!\sum_{\substack{m=-a\!\!\!\!\mod \!N\\m>0}}\sum_{n=1}^\infty\left(\frac{m}{N}\right)^{k-1}\,(mn)^{-l}\,\mu_N^{-nb}\\
\label{eq:proof_L_series_1}&\,=\frac{1}{N^{k-1}}\sum_{\substack{m=a\!\!\!\!\mod\! N\\m>0}}m^{k-l-1}\sum_{n=1}^\infty\frac{\mu_N^{nb}}{n^{l}}+\frac{(-1)^k}{N^{k-1}}\sum_{\substack{m=-a\!\!\!\!\mod\! N\\m>0}}m^{k-l-1}\sum_{n=1}^\infty\frac{\mu_N^{-nb}}{n^{l}}\\
\nonumber&\,=\frac{1}{N^{l}}\,\zeta\left(1-k+l,\tfrac{a}{N}\right)\,\Li_l\big(\mu_N^b\big)+\frac{(-1)^k}{N^{l}}\,\zeta\left(1-k+l,1-\tfrac{a}{N}\right)\,\Li_l\big(\mu_N^{-b}\big)\,,
\end{align}
where $\zeta(s,a) := \sum_{n=0}^{\infty}\tfrac{1}{(n+a)^s}$ denotes the Hurwitz zeta function (for $(a,l)=(0,k-1)$, we need replace $\zeta(s,a)$ by $\zeta_s$).
We now discuss different cases in turn.

If $(a,l)\neq(0,k-1)$ and $(b,l)\neq(0,1)$, since $0< l<k$, we have
\beq
\zeta\left(1-k+l,1-\tfrac{a}{N}\right) = (-1)^{k-l}\,\zeta\left(1-k+l,\tfrac{a}{N}\right) = (-1)^{k-l-1}\,\frac{B_{k-l}\left(\tfrac{a}{N}\right)}{k-l}\,.
\eeq
This gives
\beq\bsp
L\big(\xi^v_k,l\big)&\,  =-\frac{1}{N^{l}}\,\frac{B_{k-l}\left(\tfrac{a}{N}\right)}{k-l}\,\left[\Li_l\big(\mu_N^b\big)+(-1)^{l}\,\Li_l\big(\mu_N^{-b}\big)\right]\\
&\, = \frac{(2\pi i)^l}{N^l(k-l)\,l!}\,B_{k-l}\left(\tfrac{a}{N}\right)\,B_{l}\left(\tfrac{b}{N}\right)\,,
\esp\eeq
where in the last step we used the inversion relation for polylogarithms,
\beq
\Li_l\left(e^{2\pi i x}\right) + (-1)^l\,\Li_l\left(e^{-2\pi i x}\right) = -\frac{(2\pi i)^l}{l!}\,B_l(x)\,.
\eeq

For $(a,l)=(0,k-1)$, eq.~\eqref{eq:proof_L_series_1} reduces to
\beq
L\big(\xi^v_k,k-1\big) = -\frac{1}{2\,N^{k-1}}\,\left[\Li_{k-1}\big(\mu_N^b\big)+(-1)^k\,\Li_{k-1}\big(\mu_N^{-b}\big)\right]
= -\frac{i^{\kappa}}{N^{k-1}}\,\Cl_{k-1}\big(\tfrac{2\pi b}{N}\big)\,,
\eeq
with $\kappa = 0$ for $k$ even and $\kappa=1$ for $k$ odd.

This completes the proof for all cases except for $(b,l) = (0,1)$. In that case we can use the following functional equation for the completed $L$-function for $k\ge 3$,
\beq
\Lambda\big(\xi^v_k,l\big) = (-1)^l\,\Lambda\big(\xi^{vS}_k,k-l\big)\,,
\eeq
where $S$ is defined in eq.~\eqref{eq:SL2_gens}. For $l=1$ and $v=(a,0)$, this gives
\beq
\Lambda\big(\xi^v_k,1\big) = -\Lambda\big(\xi^{vS}_k,k-1\big)\,,
\eeq
and, since $vS = (0,-a)$, we recover the previous case.
\end{proof}


\section{Eisenstein cocycles}
\label{sec:cocycles}

\subsection{The Eichler-Shimura isomorphism}

Let $\Gamma$ be a group acting on a vector space $V$. We define a cochain complex $(C^\bullet,\delta)$, where the spaces $C^k(\Gamma,V)$ are defined as
\beq
C^k(\Gamma,V) := \Hom(\Gamma^k,V)\,.
\eeq
The connecting homomorphism $\delta:C^k\to C^{k+1}$ sends a map $\varphi: \Gamma^k\to V$ to the map $\delta\varphi$ defined as follows:
\beq\bsp
(\delta\varphi)(g_1,\dotsc,g_{k+1})&\, := g_1\varphi(g_2,\dotsc,g_{k+1})+ (-1)^{k+1} \varphi(g_1,\dotsc,g_k)\\
&\, + \sum_{j=1}^k (-1)^j \varphi(g_1,\dotsc,g_{j-1},g_jg_{j+1},\dotsc,g_{k+1})\,.
\esp\eeq
The $k^{\textrm{th}}$ cohomology group is then given by the quotient 
\beq
H^k(\Gamma,V) := Z^k(\Gamma,V) / B^k(\Gamma,V) \,, 
\eeq
where $Z^k$ and $B^k$ are respectively the subspaces of cocycles and coboundaries
\beq
Z^k(\Gamma,V) := \ker(\delta:C^k\to C^{k+1})\textrm{~~~and~~~} B^k(\Gamma,V) :=\im(\delta:C^{k-1}\to C^k)\,.
\eeq

We now specialise these definitions to our case. The group $\Gamma$ is a congruence subgroup of $\SL_2(\Z)$, and we will mostly consider the case where $\Gamma=\Gamma(N)$ is the principal congruence subgroup of level $N$. $V=V_{k-2}$ is the vector space of homogeneous polynomials with complex coefficients of degree $k-2$ in two variables $(X,Y)$. 
$\gamma=\smat{a&b}{c&d}\in\Gamma$ acts on polynomials $P\in V_{k-2}$ via
\beq
P(X,Y)\bvert_\gamma := P(aX+bY,cX+dY)\,.
\eeq
We will only need the following terms in the cochain complex 
\beq\bsp
	C^0(\Gamma,V_{k-2}) &= V_{k-2}\,,\\
	C^1(\Gamma,V_{k-2}) &= \Hom(\Gamma,V_{k-2})\,,\\
	 C^2(\Gamma,V_{k-2}) &= \Hom(\Gamma^2,V_{k-2})\,.
\esp\eeq
For $P\in C^0(\Gamma,V_{k-2})$ and  $\varphi\in C^1(\Gamma,V_{k-2})$, the connecting homomorphisms are
\beq\bsp
(\delta P)(\gamma) &\,= P\bvert_\gamma -P\,,\\
(\delta\varphi) (\alpha,\beta) &\,= \varphi(\alpha)\bvert_\beta - \varphi(\alpha\beta) +\varphi(\beta)\,.
\esp\eeq 
Note that cocycles $\varphi\in Z^1(\Gamma,V_{k-2})$ satisfy the cocycle identity
\beq
\varphi(\alpha\beta) = \varphi(\alpha)\bvert_\beta + \varphi(\beta)\,. 
\eeq

Let us now discuss how we can assign a cocycle to a modular form. For a holomorphic function $f:\HH\to\C$, we define the differential form
\begin{equation}
	\ul{f}(z) = 2\pi i\, f(z)\, (X-zY)^{k-2}\, \rd z\,.
\end{equation}	
It is easy to check that if $f$ is a weakly modular function of weight $k$, then $\ul{f}$ is $\Gamma$-invariant,
\beq
\ul{f}(\gamma\cdot z)\bvert_{\gamma} = \ul{f}(z)\,.
\eeq
Let us define (cf. ref.~\cite{Brown:2018ut})
\beq
I_f(\tau) := \int_\tau^{\vec{1}_\infty} \ul{f}(z)\,.
\eeq
In general, $I_f(\tau)$ will not be $\Gamma$-equivariant. To `measure' how far $I_f(\tau) $ is from being equivariant, we consider the difference
\beq\label{eq:Cf_def}
C_f(\gamma) := I_f(\gamma\cdot\tau) \bvert_{\gamma} - I_f(\tau) \,.
\eeq
It is easy to check that $C_f(\gamma)$ does not depend on the choice of $\tau\in\HH$, and so $C_f(\gamma)\in V_{k-2}$ for all $\gamma\in \Gamma$, and $C_f$ defines cochain in $C^1(\Gamma,V_{k-2})$. $C_f$ actually defines a cocycle, i.e., for all $\alpha,\beta\in\Gamma$, we have
\beq\label{eq:cocycle_id}
C_f(\alpha\beta) = C_f(\alpha)\bvert_{\beta} + C_{f}(\beta)\,.
\eeq
We thus see that every modular form $f$ of weight $k$ defines a cocycle $C_f\in Z^1(\Gamma,V_{k-2})$. Conversely, up to coboundaries, every cocylce arises in this way:
\begin{thm}[Eichler-Shimura isomorphism]\label{thm:ESiso}
	There is an isomorphism
	$$\Phi: M_k(\Gamma)\oplus \overline{S_k}(\Gamma) \xlongrightarrow{\sim} H^1(\Gamma,V_{k-2}),$$
	sending an element $f$ to the class of the cocycle $C_f:\gamma\mapsto C_f(\gamma)$.
\end{thm}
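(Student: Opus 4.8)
The plan is to show that $\Phi$ is a well-defined $\C$-linear map, that it is injective, and then to deduce surjectivity from a dimension count, both sides being finite-dimensional. First I would pin down $\Phi$ on the whole domain. On $M_k(\Gamma)$ the cochain $C_f$ is already a cocycle by eq.~\eqref{eq:cocycle_id}, and its class in $H^1$ is independent of the choices entering $I_f$ (base point, path), since any such change shifts $I_f$ by a fixed element of $V_{k-2}$, i.e.\ by a coboundary. On $\overline{S_k}(\Gamma)$ I would use the antiholomorphic Eichler integral: for $g\in S_k(\Gamma)$ set $\ul{\bar g}(z):=2\pi i\,\overline{g(z)}\,(X-\bar z Y)^{k-2}\,\rd\bar z$ and $I_{\bar g}(\tau):=\int_\tau^{\vec{1}_\infty}\ul{\bar g}(z)$; because $\gamma\in\Gamma$ has real entries, conjugating the invariance of $\ul g$ gives $\ul{\bar g}(\gamma z)\bvert_\gamma=\ul{\bar g}(z)$, and the same computation that produced eq.~\eqref{eq:cocycle_id} yields a cocycle $C_{\bar g}$. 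I then set $\Phi(f+\bar g):=[\,C_f+C_{\bar g}\,]$, which is manifestly $\C$-linear; the domain is a genuine direct sum because a holomorphic $f$ equal to an antiholomorphic $-\bar g$ is constant, and the only constant form of weight $k\ge 2$ is $0$.

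For injectivity, suppose $C_f+C_{\bar g}=\delta P$ with $P\in V_{k-2}$. Then $J:=I_f+I_{\bar g}-P$ is $\Gamma$-equivariant, so its exterior derivative, built from $\ul f$ and $\ul{\bar g}$, descends to a closed $\mathcal{V}_{k-2}$-valued $1$-form on $Y_\Gamma:=\Gamma\backslash\HH$. The robust tool for the cuspidal part is the cup-product pairing $H^1(\Gamma,V_{k-2})\times H^1(\Gamma,V_{k-2})\to\C$ induced by the symplectic form on $V_{k-2}$: via Haberland's formula this pairing equals, up to a nonzero constant, the Petersson inner product on the cuspidal components, so a vanishing class pairs to $\langle g,g\rangle_{\mathrm{Pet}}=0$ and forces $g=0$, and symmetrically the cuspidal part of $f$ vanishes. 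The Eisenstein part is controlled by the period computation of Section~\ref{sec:L-series}: the class of $C_f$ is detected by the completed periods $\Lambda(f,l)=\int_{\vec{1}_0}^{\vec{1}_\infty}f(\tau)\tau^{l-1}\rd\tau$ for $1\le l\le k-1$, and the closed formulas for $\Lambda(H_k^v,l)$ in terms of Bernoulli polynomials and Clausen values show these do not all vanish for a nonzero Eisenstein series; hence the Eisenstein part injects modulo the parabolic subspace, and $\Phi$ is injective.

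For surjectivity I would compare dimensions, taking $\Gamma=\Gamma(N)$ with $N\ge 3$ so that $\Gamma$ is torsion-free and $-\mathds{1}\notin\Gamma$. Then $Y_\Gamma$ is a $K(\Gamma,1)$, whence $H^\bullet(\Gamma,V_{k-2})=H^\bullet(Y_\Gamma,\mathcal{V}_{k-2})$; as $Y_\Gamma$ is an open curve $H^2=0$, and for $k>2$ one has $V_{k-2}^\Gamma=0$ because $\Gamma(N)$ is Zariski-dense in $\SL_2$ and $V_{k-2}$ carries no $\SL_2$-invariants, so $H^0=0$ and $\dim H^1=-\chi(Y_\Gamma,\mathcal{V}_{k-2})=(k-1)(2g-2+c)$, with $g$ the genus and $c$ the number of cusps of $Y_\Gamma$. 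The standard Riemann--Roch dimension formulas give the matching total $\dim M_k(\Gamma)+\dim S_k(\Gamma)=(k-1)(2g-2+c)$, so injectivity upgrades $\Phi$ to an isomorphism. The case of general $\Gamma$ then follows by passing to $\bigslant{\Gamma(N)}{\Gamma}$-invariants, and the cases $N\le 2$ and $k=2$ require the usual adjustments for $-\mathds{1}$ and for the non-vanishing of $H^0$.

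I expect the technical heart, and the main obstacle, to be the injectivity on cusp forms: establishing Haberland's formula—that the cup product of Eichler cocycles reproduces the Petersson inner product—and then invoking its positivity. The dimension bookkeeping is the second delicate point, in particular matching the Euler characteristic with $\dim M_k+\dim S_k$ and treating the cusps and the low-level exceptions correctly, but this becomes routine once the period formulas of Section~\ref{sec:L-series} are in hand.
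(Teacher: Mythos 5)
The paper does not actually prove Theorem~\ref{thm:ESiso}; it quotes it as a classical result and points to Hida's book, so there is no in-paper argument to compare against. Your sketch is the standard Shimura-style proof (well-definedness and linearity, injectivity via a pairing, surjectivity by dimension count), and most of its architecture is sound: the antiholomorphic Eichler integral producing the cocycle $C_{\bar g}$, the observation that changing base point or path shifts $I_f$ by a fixed element of $V_{k-2}$ and hence $C_f$ by a coboundary, the identification of group cohomology with the cohomology of the local system on $\Gamma\backslash\HH$ for torsion-free $\Gamma$, the Euler-characteristic count $\dim H^1=(k-1)(2g-2+c)$ with $H^0=H^2=0$, the matching Riemann--Roch total $\dim M_k+\dim S_k$, and the reduction of general $\Gamma$ to $\Gamma(N)$ by taking finite-group invariants in characteristic zero. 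Granting Haberland's formula for the cuspidal injectivity (a genuine and nontrivial input, as you say), this is a legitimate route.

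One step is aimed at the wrong target. To separate the Eisenstein classes you invoke the non-vanishing of the completed periods $\Lambda(f,l)$, which are the coefficients of $C_f(S)$. But a nonzero coboundary $\delta P$ generically also has $\delta P(S)\neq 0$, so non-vanishing of these periods shows neither that the class $[C_f]$ is nonzero nor that it is independent of the parabolic subspace; for $\Gamma(N)$ with many cusps the value on the single element $S$ cannot suffice in any case. The quotient $H^1/H^1_{\mathrm{par}}$ is by definition detected by restriction to the cusp stabilizers, i.e.\ by the values $C_f(T_x)$ on the parabolic generators $T_x$ at each cusp $x$. By the computation underlying Theorem~\ref{thm:Eisenstein_cocycle}, $C_f(T_x)$ is the constant Fourier coefficient $a_0(f[\gamma_x]_k)$ times the class of $\big((X+Y)^{k-1}-X^{k-1}\big)/Y$, which is nonzero in the coinvariants $V_{k-2}/(T-1)V_{k-2}$; and an Eisenstein series whose constant term vanishes at every cusp is a cusp form orthogonal to all cusp forms, hence zero. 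Substituting this mechanism for the $\Lambda$-value argument closes the gap; the rest of your bookkeeping, including the caveats for $k=2$ and for $-\mathds{1}\in\Gamma$, is correctly flagged.
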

For a proof, see for example ref.~\cite[Section 6.2]{hida}. Said differently, the Eichler-Shimura isomorphism states that every cohomology class in $H^1(\Gamma,V_{k-2})$ contains exactly one element of the form $C_f$. The summand $\overline{S_k}$ denotes the space of complex conjugates of cusp forms. It does not play a role in the remainder of this paper and can therefore be ignored in the following. 

\subsection{Eisenstein cocycles}
In ref.~\cite{Brown:2018ut} Brown computed the cocycles attached to Eisenstein series for the full modular group $\Gamma=\SL_2(\Z)$. Eisenstein cocycles for the principal congruence subgroups were given in ref.~\cite{Heumann2014}. We recall those results here, and we translate them to our conventions.

In the following we fix a principal congruence subgroup of level $N$. We note that we can lift $C_k^v := C_{\xi_k^v}$ to a cochain for the full modular group by extending the definition~\eqref{eq:Cf_def} to all $\gamma\in \SL_2(\Z)$:
\beq\bsp\label{def:extendedEisensteincocycle}
		C_k^{v} : \SL_2(\Z) &\to V_{k-2}\,,\qquad
		\gamma\mapsto \int^{\vec{1}_\infty}_{\gamma\tau} \ul{\xi_k^{v\gamma^{-1}}}(z) \bvert_\gamma - \int^{\vec{1}_\infty}_\tau \ul{\xi_k^v}(z)\,.
	\esp\eeq
	If $\gamma\in\Gamma(N)$, then $C_k^v$ agrees with the cocycle $C_{\xi_k^v}$.
If $\gamma \notin\Gamma(N)$, then $C_k^v$ will in general not be a cocycle. However, since the $\xi_k^v$ have nice transformation properties for the full modular group, $C_k^v$ will satisfy the \emph{extended cocycle identity}:
	\begin{equation}\label{eq:extendedcocycleid}
		C_k^{v}(\alpha\beta) = C_k^{v\beta^{-1}}(\alpha)\bvert_\beta + C_k^{v}(\beta)\,,\qquad \alpha,\beta\in \SL_2(\Z)\,.
	\end{equation}
	The proof of eq.~\eqref{eq:extendedcocycleid} is similar to the proof of the cocycle identity in eq.~\eqref{eq:cocycle_id}.
Since $\SL_2(\Z)$ is generated by $-\mathds{1}$ and the elements $S$ and $T$ in eq.~\eqref{eq:SL2_gens}, it follows from the extended cocycle identity that $C_k^v$ is fixed once we know how it acts on $S$ and $T$. The values of Eisenstein cocycles for principal congruence subgroups on $S$ and $T$ were derived in ref.~\cite{Heumann2014}. We present this result, and include a derivation which uses our conventions for the action of the modular group.

\begin{thm}\label{thm:Eisenstein_cocycle}
	The cocycle of the Eisenstein series $\xi_k^v$ with $v=(a,b)$ evaluated at the generators $S$ and $T$ of $\SL_2(\Z)$ is given by:
	\beq\bsp\label{eq:Ckv_ST}
C_k^v(T) &\,= \frac{2\pi i}{k-1} \frac{B_k(\frac{a}{N})}{k} \frac{(X+Y)^{k-1}-X^{k-1}}{Y}\,,\\
C_k^v(S) &\,= -2\pi i \sum_{l=0}^{k-2} \binom{k-2}{l} X^{k-2-l} (-Y)^l \Lambda(\xi_k^{v},l+1)\,.
\esp\eeq
\end{thm}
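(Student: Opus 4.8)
The plan is to compute the two cocycle values directly from the definition in eq.~\eqref{def:extendedEisensteincocycle} by inserting the generators $S$ and $T$, reducing each to an explicit integral of the differential form $\ul{\xi_k^v}(z) = 2\pi i\,\xi_k^v(z)\,(X-zY)^{k-2}\,\rd z$ over a path from a tangential base point, and then evaluating these integrals using the Fourier expansion in eq.~\eqref{eq:xi_Fourier} together with the regularisation prescription of eq.~\eqref{eq:regularisation}. For both generators I would pick the convenient choice $\tau \to i\infty$ (or more precisely a tangential base point there) to make as many boundary terms as possible degenerate, so that only the genuinely nontrivial pieces survive.

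For the $T$-case, I would use that $T\tau = \tau+1$ and that $T$ acts trivially on the $q_N$-expansion up to the phase shift encoded in the index, so that $\ul{\xi_k^{vT^{-1}}}$ differs from $\ul{\xi_k^{v}}$ only through the polynomial substitution $X \mapsto X$, $Y\mapsto Y$ combined with the shift in $z$. The difference of the two integrals in eq.~\eqref{def:extendedEisensteincocycle} then collapses to the contribution of the constant Fourier coefficient $-\tfrac{1}{k}B_k(\tfrac aN)$ integrated against $(X-zY)^{k-2}$ over the segment from $\tau$ to $\tau+1$; carrying out this polynomial integration and using the binomial expansion of $(X-zY)^{k-2}$ produces exactly the telescoping combination $\tfrac{(X+Y)^{k-1}-X^{k-1}}{Y}$ after summing the geometric-type series in the exponent. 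The regularisation only matters for the constant term, and eq.~\eqref{eq:regularisation} handles it cleanly.

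For the $S$-case, the key observation is that $S$ exchanges the two cusps $0$ and $i\infty$, so the combined path in eq.~\eqref{def:extendedEisensteincocycle} (after applying $\bvert_S$, which sends $X-zY \mapsto (X-zY)\bvert_S$ and reparametrises $z\mapsto Sz = -1/z$) becomes an integral of $\ul{\xi_k^v}$ along the full geodesic from $\vec 1_0$ to $\vec 1_\infty$. Expanding $(X-zY)^{k-2}$ binomially and recognising the resulting $z$-moment integrals $\int_{\vec 1_0}^{\vec 1_\infty}\rd z\,\xi_k^v(z)\,z^l$ as precisely the completed $L$-values $\Lambda(\xi_k^v,l+1)$ defined in eq.~\eqref{eq:Lambda_def}, I obtain the stated sum $-2\pi i\sum_{l=0}^{k-2}\binom{k-2}{l}X^{k-2-l}(-Y)^l\,\Lambda(\xi_k^v,l+1)$. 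The factor $(-Y)^l$ and the binomial coefficient come straight from the expansion of the weight-$(k-2)$ polynomial, and the $S$-invariance of the differential form $\ul{\xi_k^v}$ (which follows from the transformation $\xi_k^{v}[S]_k = \xi_k^{vS}$ combined with the pairing properties) ensures the two halves glue into a single integral.

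\textbf{The main obstacle} I anticipate is bookkeeping the tangential-base-point regularisation consistently across the two endpoints in the $S$-case: both cusps $0$ and $i\infty$ can carry nonvanishing constant Fourier terms (depending on whether $a\equiv 0$), and one must check that the regularised integral $\int_{\vec 1_0}^{\vec 1_\infty}$ from eq.~\eqref{eq:Lambda_def} correctly reproduces the analytically continued $L$-value for every admissible $l$, including the boundary exponents $l=0$ and $l=k-2$ where the earlier theorem's case distinctions (the $\Cl_{k-1}$ values) enter. Reconciling the divergent constant-term pieces at the two cusps against the $\Lambda$-regularisation — and verifying that the transformation behaviour $\xi_k^{vS^{-1}}\!\bvert_S$ matches the index shifts exactly, rather than up to a sign or a root-of-unity phase — is where the calculation demands the most care; the polynomial algebra itself is routine once the integrals are identified.
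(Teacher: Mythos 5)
Your proposal follows essentially the same route as the paper: reduce both cocycle values to regularised integrals directly from the definition, observe that for $T$ the holomorphic tails cancel and only the constant Fourier term $-\tfrac{1}{k}B_k(\tfrac{a}{N})$ survives as a polynomial integral over a unit segment (the paper's cancellations land this on the segment from $0$ to $-1$ rather than from $\tau$ to $\tau+1$ --- taken literally your endpoints would give a $\tau$-dependent expression, so the bookkeeping there needs one more step), and for $S$ glue the two pieces into a single regularised integral from $\vec{1}_0$ to $\vec{1}_\infty$ whose $z$-moments are, after binomial expansion of $(X-zY)^{k-2}$, exactly the completed $L$-values $\Lambda(\xi_k^v,l+1)$. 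The two-cusp regularisation issue you correctly flag as the main obstacle is resolved in the paper by the intermediate identity $C_{G}(S) = -\int_{0}^{i\infty}\ul{G}(z) -(\ul{a_0}(z) + \ul{a_0}(S z)\bvert_S)$, whose subtracted constant-term forms are precisely the tangential-base-point corrections already built into the definition of $\Lambda$.
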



\begin{proof}
We start by noting that, for $\gamma = \left(\begin{smallmatrix} a&b\\c&d\end{smallmatrix}\right)$,  we can write in general
	\begin{equation}
		\begin{split}
			\int_{\gamma\tau}^{i\infty} \ul{\wt G_k^{v}} (z) \bvert_\gamma &= \int_\tau^{\gamma^{-1}i\infty} (G_k^{v}(\gamma z)-a_0^{v}) (X-\gamma z Y)\bvert_\gamma^{k-2} \rd(\gamma \cdot z) \\
			&= \int_\tau^{\gamma^{-1}i\infty} (G_k^{v\gamma}(z) - a_0^{v} (cz +d)^{-k}) (X-zY)^{k-2} \rd z \\
			&= \int_\tau^{\gamma^{-1}i\infty} \ul{\wt G_k^{v\gamma}}(z) + (a_0^{v\gamma} - a_0^{v} (cz +d)^{-k}) (X-zY)^{k-2} \rd z\,,
		\end{split}
	\end{equation}
	where $a_0^v$ is the zeroth Fourier coefficient of $G_k^v$.
	In the following, we denote the second integrand by $\ul{a_0^{v\gamma}}(z) - \ul{a_0^{v}}(\gamma z)\bvert_\gamma$.	

We now show that from these relations it follows that	 for any Eisenstein series $G$ of weight $k$, we have
    \begin{align}\label{eq:cocyclesimpleS}
        C_{G}(S) &\,= - \int_{0}^{i\infty} \ul{G}(z) -(\ul{a_0}(z) + \ul{a_0}(S z)\bvert_S)\,,\\  \label{eq:cocyclesimpleT}
        C_{G}(T) &\,= - \int_{-1}^{0} \ul{a_0(z)} \,,
	\end{align}
Since the $G_k^v$ form a spanning set, it is sufficient to show that these identities hold for the spanning set. In particular, they also hold for the Eisenstein series $\xi_k^v$. We prove the statement for $S$ and $T$ separately. 
	
	Set $\gamma =S$. Recall from eq.~\eqref{eq:Gkv_Fourier} that $a_0^v = \delta_{c,0}\zeta^d_k$. If we apply $S$ to the cusp $v =(c,d)$, we obtain $ vS = (-d,c)$. As a consequence, the constant Fourier term $ a_0^{vS}$ can only be non-trivial if $d$ vanishes. In that case $c$ is certainly not zero, because $v\in \Z_N$. This means that either $a_0^v$ or $a_0^{v\gamma}$ vanishes. The Eisenstein cocycle is then given by		
    \begin{align}
	\nonumber		C_{G_k^v}(S) &= \int_{S\tau}^{\vec{1}_\infty} \ul{G_k^{vS^{-1}}} (z) \bvert_S - \int_\tau^{\vec{1}_\infty} \ul{G_k^v} (z)\\
\nonumber			&= \int_\tau^{0} \ul{ G_k^{v}}(z)  - \ul{a_0^{vS^{-1}}} (Sz)\bvert_S - \int_0^{S\tau} \ul{a_0^{vS^{-1}}} (z) \bvert_S - \int_\tau^{i\infty} \ul{\wt G_k^v} (z) + \int_0^\tau \ul{a_0^v} (z)\\
			&= - \int_0^{i\infty} \ul{ G_k^v} (z)- (\ul{a_0^v} (z)+\ul{a_0^{vS^{-1}}} (Sz)\bvert_S)\,.
		\end{align}	
	
	For $\gamma =T$, we obtain for a cusp $v=(x,y)$ that the constant Fourier term satisfies $a_0^{vT^{-1}} = \delta_{x,0}\,\zeta^{y-x}_k = \delta_{x,0}\,\zeta^y_k = a_0^v$. It is easy to check that this implies
	\beq
	\ul{a_0^{v}}(z) - \ul{a_0^{vT^{-1}}}(T z)\bvert_T=0\,.
	\eeq 
	Therefore, the Eisenstein cocycle for $\gamma =T$ simplifies to
	\begin{equation}
		\begin{split}
			C_{G_k^v}(T) &= \int_{T\tau}^{\vec{1}_\infty} \ul{G_k^{vT^{-1}}} (z) \bvert_T - \int_\tau^{\vec{1}_\infty} \ul{G_k^v} (z)\\
			&= \int_\tau^{i\infty} \ul{\wt G_k^{v}}(z) - \int_0^{T\tau} \ul{a_0^{vT^{-1}}} (z) \bvert_T - \int_\tau^{i\infty} \ul{\wt G_k^v} (z) + \int_0^\tau \ul{a_0^v} (z)\\
			&= \int_\tau^{-1} \ul{a_0^{vT^{-1}}} (Tz) \bvert_T + \int_0^\tau \ul{a_0^v} (z)\\
			& = \int_0^{-1} \ul{a_0^v} (z)\,.
		\end{split}
	\end{equation}	
	
	We can now apply eqs.~\eqref{eq:cocyclesimpleT} and~\eqref{eq:cocyclesimpleS} with $G=\xi_k^v$. From eq.~\eqref{eq:cocyclesimpleT} we obtain:
	\beq\bsp\label{eq:Ckv_T}
	C_{k}^{v}(T) &\,= -2\pi i a_0(\xi_k^{v}) \,\int_{-1}^0 (X-zY)^{k-2} \rd z \\
	&\,= \frac{2\pi i}{k-1} \frac{B_k(\frac{a}{N})}{k} \frac{(X+Y)^{k-1}-X^{k-1}}{Y}\,,
	\esp\eeq
	where the constant term in the Fourier expansion $a_0(\xi_k^{v})$ is given in eq.~\eqref{eq:xi_Fourier}. Similarly, from eq.~\eqref{eq:cocyclesimpleS}, we have
	\beq\bsp
			C_{k}^{v}(S) &= -2\pi i \int_0^{i\infty} \wt{\xi_k^{v}} (z) (X-zY)^{k-2} \rd z\\
		&= -2\pi i \sum_{l=0}^{k-2} \binom{k-2}{l} X^{k-2-l} (-Y)^l \Lambda(\xi_k^{v},l+1)\,.
		\esp\eeq
	\end{proof}
	
	Since together with $-\mathds{1}$, $S$ and $T$ generate $\SL_2(\Z)$,  eq.~\eqref{eq:Ckv_ST} combined with the extended cocycle identity~\eqref{eq:extendedcocycleid} is sufficient to compute $C_k^v(\gamma)$ for all $\gamma\in\SL_2(\Z)$. For $N=1$, eq.~\eqref{eq:Ckv_ST} reduces to the expressions given for the Eisenstein cocycles of the full modular group by Brown in ref.~\cite{Brown:2018ut}. The main difference is that for $N=1$, the completed $L$-functions for Eisenstein series evaluate to multiple zeta values, while for $N>1$ the result involves Clausen values which may not be expressible in terms of zeta values. A variant of Theorem~\ref{thm:Eisenstein_cocycle} was presented in ref.~\cite{Heumann2014}, albeit using very different conventions for the $\SL_2(\Z)$ actions. In order to make comparisons to ref.~\cite{Brown:2018ut} and to the string theory literature more transparent, we have rederived Theorem~\ref{thm:Eisenstein_cocycle} using the conventions of ref.~\cite{Brown:2018ut}. Finally, we note that we can separately look at the real and imaginary parts of $C_k^v$ (where complex conjugation acts non-trivially only on the coefficients of the polynomial, and it acts trivially on $(X,Y)$). Of particular interest to us will be the real part of the cocycle. In order to state the result, it is useful to introduce the following quantities
	\beq\bsp\label{def:PXi}
	A_{\xi_k^{v}}&\, :=
    (-1)^{k-1+\lceil\frac{k}{2}\rceil} \, \delta_{a,0} \frac{(k-2)!}{(2\pi)^{k-2}} \Cl_{k-1}\left(\tfrac{2\pi b}{N}\right)\,,\\
		P_{\xi_k^{v}}&\,:=  A_{\xi_k^{v}}Y^{k-2}\,,
	\esp\eeq
	with $v=(a,b)$.

\begin{thm}\label{thm:rpcoboundaryXi}
	For $k\geq 3$ and $v\in\Z_N^2$, the real part of $C_{k}^{v}(\gamma)$ for any $\gamma\in\SL_2(\Z)$ is given by
	\begin{equation}\label{eq:ReC_1}
		\Re C_k^{v}(\gamma) = P_{\xi_k^{v\gamma^{-1}}}\bvert_\gamma - P_{\xi_k^{v}}\,.
	\end{equation}
	If $\gamma\in\Gamma(N)$, then $v\gamma\equiv v\!\! \mod N$ and the real part of the cocycle $C_{\xi_k^v}$ is a coboundary:
	\begin{equation}\label{eq:ReC_2}
		\Re C_{\xi_k^{v}} = \delta\!\left(P_{\xi_k^{v}}\right) = A_{\xi_k^{v}}\,\delta\!\left(Y^{k-2}\right)\,.
	\end{equation}
\end{thm}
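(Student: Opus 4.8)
The plan is to prove eq.~\eqref{eq:ReC_1} by reducing it to the generators $S$ and $T$, and then to read off eq.~\eqref{eq:ReC_2} as a special case. The starting observation is that \emph{both} sides of eq.~\eqref{eq:ReC_1} obey the extended cocycle identity~\eqref{eq:extendedcocycleid}. For the right-hand side, set $\psi^w(\gamma):=P_{\xi_k^{w\gamma^{-1}}}\bvert_\gamma-P_{\xi_k^w}$; using $\big(P\bvert_\alpha\big)\bvert_\beta=P\bvert_{\alpha\beta}$ one checks directly that $\psi^v(\alpha\beta)=\psi^{v\beta^{-1}}(\alpha)\bvert_\beta+\psi^v(\beta)$. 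For the left-hand side, every $\gamma\in\SL_2(\Z)$ has real entries, so the slash action commutes with complex conjugation of the polynomial coefficients; taking real parts of eq.~\eqref{eq:extendedcocycleid} therefore preserves its form. Since $S$ and $T$ generate $\SL_2(\Z)$ and both identities hold for every twist $v$, it suffices to verify eq.~\eqref{eq:ReC_1} for $\gamma\in\{S,T\}$.

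For $\gamma=T$ I would argue that both sides vanish. On the right, $vT^{-1}=(a,b-a)$, and the factor $\delta_{a,0}$ in $A_{\xi_k^v}$ forces $A_{\xi_k^{vT^{-1}}}=A_{\xi_k^v}$ (the Clausen argument is unchanged when $a=0$); together with $Y^{k-2}\bvert_T=Y^{k-2}$ this gives $\psi^v(T)=0$. On the left, $C_k^v(T)$ from eq.~\eqref{eq:Ckv_ST} is a polynomial with rational coefficients multiplied by the purely imaginary prefactor $\tfrac{2\pi i}{k-1}\tfrac{B_k(a/N)}{k}$, so $\Re C_k^v(T)=0$.

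The main step is $\gamma=S$. Starting from $C_k^v(S)$ in eq.~\eqref{eq:Ckv_ST}, I would insert $\Lambda(\xi_k^v,l+1)=\tfrac{(k-1)!}{(2\pi i)^k}\Lambda(H_k^v,l+1)$ and substitute the three cases of Theorem~1. In the generic case the value is real (a product of Bernoulli polynomials), so after multiplication by the overall $-2\pi i$ and the real monomials $X^{k-2-l}(-Y)^l$ those terms are purely imaginary and drop out of $\Re C_k^v(S)$. Only two terms survive: $l=k-2$ when $a=0$ (the $Y^{k-2}$ monomial) and $l=0$ when $b=0$ (the $X^{k-2}$ monomial), both carrying Clausen values. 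Collecting factorials and powers of $2\pi$ reproduces the normalisation $\tfrac{(k-2)!}{(2\pi)^{k-2}}$, and the accumulated phase is $i^{2-k+\kappa}$. The decisive arithmetic input, which I would check separately for $k$ even ($\kappa=0$, $\lceil k/2\rceil=k/2$) and $k$ odd ($\kappa=1$, $\lceil k/2\rceil=(k+1)/2$), is that $2-k+\kappa$ is even and
\beq
i^{2-k+\kappa}=(-1)^{k-1+\lceil k/2\rceil}\,.
\eeq
With this identity the surviving coefficients become exactly $A_{\xi_k^{(0,a)}}$ (times $X^{k-2}$) and $-A_{\xi_k^{(0,b)}}$ (times $Y^{k-2}$); since $vS^{-1}=(-b,a)$ and $Y^{k-2}\bvert_S=X^{k-2}$, this is precisely $\psi^v(S)=A_{\xi_k^{(-b,a)}}X^{k-2}-A_{\xi_k^{(a,b)}}Y^{k-2}$, where the Kronecker $\delta$'s match the conditions $b=0$ and $a=0$.

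Having matched both generators, the extended cocycle identity propagates eq.~\eqref{eq:ReC_1} to all $\gamma\in\SL_2(\Z)$. Finally, for $\gamma\in\Gamma(N)$ we have $\gamma\equiv\mathds{1}\bmod N$, hence $v\gamma^{-1}\equiv v\bmod N$ and $\xi_k^{v\gamma^{-1}}=\xi_k^{v}$, so $P_{\xi_k^{v\gamma^{-1}}}=P_{\xi_k^{v}}$. Equation~\eqref{eq:ReC_1} then reads $\Re C_{\xi_k^v}(\gamma)=P_{\xi_k^v}\bvert_\gamma-P_{\xi_k^v}=(\delta P_{\xi_k^v})(\gamma)$, and pulling the scalar $A_{\xi_k^v}$ through $\delta$ yields $A_{\xi_k^v}\,\delta(Y^{k-2})$, which is eq.~\eqref{eq:ReC_2}. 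I expect the only genuine obstacle to be the phase and sign bookkeeping in the $S$ computation: tracking the powers of $i$ and $-1$ through the conversion $H_k^v\to\xi_k^v$ and the Clausen reflection so as to land exactly on the sign $(-1)^{k-1+\lceil k/2\rceil}$ built into $A_{\xi_k^v}$.
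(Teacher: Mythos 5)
Your proposal is correct and follows essentially the same route as the paper's proof: reduce eq.~\eqref{eq:ReC_1} to the generators $S$ and $T$ via the extended cocycle identity, observe that both sides vanish on $T$ (purely imaginary prefactor on the left, $\delta_{a,0}$ forcing $A_{\xi_k^{vT^{-1}}}=A_{\xi_k^v}$ on the right), and match the $S$ values by inserting the Clausen-value formulas for $\Lambda(\xi_k^v,l+1)$, with the generic Bernoulli terms dropping out of the real part; your phase $i^{2-k+\kappa}=(-1)^{k-1+\lceil k/2\rceil}$ agrees with the paper's $(-1)^{k-1}i^{k+\kappa}$. The only (welcome) addition is that you explicitly verify that the right-hand side $\psi^w(\gamma)$ satisfies the twisted cocycle identity, a step the paper leaves implicit.
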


\begin{proof}
Equation~\eqref{eq:ReC_2} immediately follows from eq.~\eqref{eq:ReC_1} for $\gamma\in\Gamma(N)$, so it is sufficient to prove eq.~\eqref{eq:ReC_1}.

First we note that the extended cocycle identity~\eqref{eq:extendedcocycleid} holds independently for the real and imaginary parts, so it is sufficient to show that eq.~\eqref{eq:ReC_1} holds for the generators $S$ and $T$. From eq.~\eqref{eq:Ckv_T} we see that $\Re C_k^v(T) = 0$. For $\gamma=T$, the right-hand side of eq.~\eqref{eq:ReC_1} becomes
\beq
P_{\xi_k^{vT^{-1}}}\bvert_T - P_{\xi_k^{v}} = A_{\xi_k^{vT^{-1}}}Y^{k-2}\bvert_T - A_{\xi_k^{v}} Y^{k-2} = \left(A_{\xi_k^{vT^{-1}}}- A_{\xi_k^{v}}\right) Y^{k-2}\,,
\eeq
where in the last step we used the fact that $Y\bvert_T=Y$. Since $A_{\xi_k^{v}}$ is proportional to $\delta_{a,0}$ and $vT^{-1} = (a,b-a)$, the difference in brackets can only be non-zero for $a=0$. But for $a=0$, $vT^{-1} = (0,b) = v$, and so the difference vanishes also for $a=0$. Hence, eq.~\eqref{eq:ReC_1} holds for $\gamma=T$.

Using eq.~\eqref{eq:Ckv_ST} as well as the expressions for the completed $L$-functions in terms of Clausen values from section~\ref{sec:L-series}, we see that we have
	\begin{equation}\bsp
		\Re C_k^v(S) &\,=  (-1)^{k-1} i^{k+ \kappa} \frac{(k-2)!}{(2\pi)^{k-2}} \left(  \delta_{b,0}\Cl_{k-1}\left(\tfrac{2\pi a}{N}\right) X^{k-2} - \delta_{a,0} \Cl_{k-1}\left(\tfrac{2\pi b}{N}\right)Y^{k-2} \right)\\
		&\, = A_{\xi_k^{(-b,a)}}\,X^{k-2} - A_{\xi_k^{(a,b)}}\,Y^{k-2}\\
			&\, = A_{\xi_k^{vS^{-1}}}\,Y^{k-2}\bvert_S - A_{\xi_k^{(a,b)}}\,Y^{k-2}	\\
						&\, = P_{\xi_k^{vS^{-1}}}\bvert_S - P_{\xi_k^{v}}	\, .
	\esp\end{equation}
\end{proof}

Since the $\xi_k^v$ form a spanning set for $E_k(\Gamma(N))$, we can compute all Eisenstein cocycles using Theorem~\ref{thm:Eisenstein_cocycle}. In particular, we can compute the cocycles for the Eisenstein series $H_k^v$ and $G_k^v$. Using eqs.~\eqref{eq:xi_def} and~\eqref{eq:HG_convert}, we obtain
\beq\bsp
C_{H_k^v} &\,= \frac{(2\pi i)^k}{(k-1)!}\,C_{\xi_k^v}\,,\\
C_{G_k^v} &\,=  \frac{(2\pi i)^k}{N^2(k-1)!}\sum_{u\in\Z_N^2} \mu_N^{-(u|v)} C_{\xi_k^{u}}\,.
\esp\eeq
We will need the real part of $C_{G_k^v}$. If $k$ is even, we obtain
\beq
	\Re C_{G_k^v} = (-1)^{\tfrac{k}{2}} \frac{(2\pi)^k}{N^2(k-1)!} \sum_{u\in \Z_N^2}  \left[\cos\tfrac{2\pi (u|v)}{N}\, \Re C_{\xi_k^{u}} + \sin\tfrac{2\pi (u|v)}{N} \,\Im C_{\xi_k^{u}}\right]\,,
	\eeq
	while for $k$ odd, we have
	\beq	
	\Re C_{G_k^v} = (-1)^{\frac{k-1}{2}} \frac{(2\pi)^k}{N^2(k-1)!} \sum_{u\in\Z_N^2}\left[ \sin\tfrac{2\pi (u|v)}{N}\, \Re C_{\xi_k^{u}} - \cos\tfrac{2\pi (u|v)}{N}\, \Im C_{\xi_k^{u}}\right]\,.
	\eeq
	We know that $\Re C_{\xi_k^{u}}$ is a coboundary. From previous equation, we see that $\Re C_{G_k^v}$ also involves the imaginary part $\Im C_{\xi_k^{u}}$, and so it is a priori not clear that $\Re C_{G_k^v}$ is a coboundary. We now show that the contribution from $\Im C_{\xi_k^{u}}$ always drops out. To see this, we use the following fact: If $f$ and $g$ are respectively even and odd functions on $\Z^2$ and $\Sigma$ is a finite subset of $\Z^2$ such that for all $s\in \Sigma$ we have $-s\in \Sigma$, then $\sum_{s\in \Sigma} f(s)g(s)= 0$. Indeed, we can split the sum to write
\beq\bsp
\sum_{s\in \Sigma} f(s)g(s) &\,= \frac{1}{2} \sum_{s\in \Sigma} f(s)g(s) + \frac{1}{2}\sum_{-s\in \Sigma} f(s)g(s)\\
&\,= \frac{1}{2} \sum_{s\in \Sigma} f(s)g(s) + \frac{1}{2}\sum_{s\in \Sigma} f(-s)g(-s)\\
&\,= \frac{1}{2} \sum_{s\in \Sigma} f(s)\left[g(s) + g(-s)\right]\\
&\,=0\,.
\esp\eeq
To use this result, we start by noting that from Theorem~\ref{thm:Eisenstein_cocycle} it is easy to see that
\beq
C_{k}^{-v} = (-1)^k\,C_{k}^{v} \,.
\eeq
This follows from the parity of the Bernoulli polynomial and the Clausen function. Since the $\cos$ and $\sin$ functions are respectively even and odd functions of $u$, we find that the contribution from $\Im C_{\xi_k^{u}}$ in $\Re C_{G_k^v}$ always drops out, and $\Re C_{G_k^v} $ is always a coboundary:
\beq
\Re C_{G_k^v}  = \delta\!\left(P_{G_k^v}\right) = A_{G_k^v}\,\delta\!\left(Y^{k-2}\right)\,,
\eeq
with
\beq
A_{G_k^v} = \frac{(-1)^{\lfloor\frac{k}{2}\rfloor}(2\pi)^k}{N^2(k-1)!} \sum_{u\in\Z_N^2} A_{\xi_k^{u}}\times\begin{cases}
			 \cos\tfrac{2\pi (u|v)}{N}\,, & \text{ if $k$ is even,}\\
			 \sin\tfrac{2\pi (u|v)}{N}\,,  & \text{ if $k$ is odd.}
		\end{cases}	
			\eeq


\section{Equivariant primitives of Eisenstein series}
\label{sec:equiv_prim}
In the previous section we have shown that the real part of the Eisenstein cocycles can be expressed as coboundaries, and we have obtained a closed analytic expression for the Eisenstein cocycles in terms of Clausen values. This result is similar to the result for the Eisenstein cocycles in level $N=1$ of ref.~\cite{Brown:2018ut}, and the Clausen values reduce to zeta values for $N=1$. In this section we show that we can now extend the construction of equivariant primitives of Eisenstein series of ref.~\cite{Brown:2018ut} to congruence subgroups. 

For an Eisenstein series $G_k^v\in E_k(\Gamma(N))$ of weight $k\ge 3$, we define
\beq
\cG_{k}^{v}(\tau) := P_{G_k^v}+\Re\int_{\vec{1}_\infty}^\tau\rd z\,\ul{G_k^v}(z)\,.
\eeq

\begin{thm}\label{thm:equivariant_Eisenstein}For $k\ge 3$, $\cG_{k}^{v}(\tau)$ is modular equivariant for $\Gamma(N)$ in the sense that for all $\gamma\in \Gamma(N)$,
\beq
\cG_{k}^{v}(\gamma\tau)\bvert_{\gamma} = \cG_{k}^{v}(\tau)\,.
\eeq
Moreover, $\cG_{k}^{v}(\tau)$ is the unique modular equivariant solution of the differential equation
\beq\label{eq:cG_DEQ}
\rd \cG_{k}^{v}(\tau) = \Re\ul{G_k^v}(z)\,.
\eeq
\end{thm}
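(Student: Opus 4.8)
The plan is to reduce everything to the coboundary property of the real part of the Eisenstein cocycle established in the previous section, combined with the fundamental theorem of calculus. First I would rewrite the definition more compactly as $\cG_{k}^{v}(\tau) = P_{G_k^v} - \Re\, I_{G_k^v}(\tau)$, using $\int_{\vec{1}_\infty}^\tau = -\int_\tau^{\vec{1}_\infty}$ and recalling $I_{G_k^v}(\tau) = \int_\tau^{\vec{1}_\infty}\ul{G_k^v}$. Throughout I would exploit two elementary commutation facts: since complex conjugation acts only on the polynomial coefficients while fixing $(X,Y)$, and since every $\gamma\in\Gamma(N)\subset\SL_2(\Z)$ has integer entries, the operation $\Re$ commutes both with the weight-$(k-2)$ slash action $\bvert_\gamma$ and with the exterior derivative $\rd$.

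For the differential equation, since $P_{G_k^v}$ is independent of $\tau$ and $\int_{\vec{1}_\infty}^\tau\ul{G_k^v}$ is a genuine holomorphic primitive of the holomorphic one-form $\ul{G_k^v}$, the fundamental theorem of calculus gives $\rd\int_{\vec{1}_\infty}^\tau\ul{G_k^v} = \ul{G_k^v}$, and applying $\Re$ yields $\rd\cG_{k}^{v} = \Re\,\ul{G_k^v}$. I would remark that the tangential-base-point regularisation only shifts the value of the primitive by the contribution of the constant Fourier term and hence does not affect its differential.

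For equivariance, I would start from the defining cocycle relation $I_{G_k^v}(\gamma\tau)\bvert_\gamma = C_{G_k^v}(\gamma) + I_{G_k^v}(\tau)$, take real parts, and feed in the key input from the previous section: for $\gamma\in\Gamma(N)$ one has $\Re C_{G_k^v} = \delta\!\left(P_{G_k^v}\right)$, i.e. $\Re C_{G_k^v}(\gamma) = P_{G_k^v}\bvert_\gamma - P_{G_k^v}$. Substituting into $\cG_{k}^{v}(\gamma\tau)\bvert_\gamma = P_{G_k^v}\bvert_\gamma - \Re\bigl(I_{G_k^v}(\gamma\tau)\bvert_\gamma\bigr)$ and simplifying, the two copies of $P_{G_k^v}\bvert_\gamma$ cancel and one is left with $P_{G_k^v} - \Re\, I_{G_k^v}(\tau) = \cG_{k}^{v}(\tau)$, which is precisely the claimed equivariance. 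For uniqueness, suppose $\cG$ and $\cG'$ are two equivariant solutions; their difference is $\rd$-closed, hence a $\tau$-independent $P_0\in V_{k-2}$, and equivariance of both forces $P_0\bvert_\gamma = P_0$ for all $\gamma\in\Gamma(N)$. I would then show $V_{k-2}^{\Gamma(N)} = 0$ for $k\ge 3$: invariance under $T^N\in\Gamma(N)$, acting by $X\mapsto X+NY$, forces $P_0$ to be a multiple of $Y^{k-2}$ (a one-variable polynomial invariant under a nontrivial shift is constant), and invariance under $\smat{1&0}{N&1}\in\Gamma(N)$, acting by $Y\mapsto NX+Y$, then kills this multiple since $k-2\ge 1$, so $P_0=0$.

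The main obstacle is essentially already discharged by the previous section: the whole content of equivariance collapses to the statement that $\Re C_{G_k^v}$ is the coboundary of exactly $P_{G_k^v}$. What remains is bookkeeping, namely checking carefully that $\Re$ commutes with both $\bvert_\gamma$ and $\rd$, and that the tangential-base-point regularisation is compatible with these manipulations.
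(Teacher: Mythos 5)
Your proposal is correct and follows essentially the same route as the paper: equivariance is reduced to the identity $\Re C_{G_k^v}(\gamma)=\delta\!\left(P_{G_k^v}\right)\!(\gamma)$ established in the cocycle section, and the differential equation follows by differentiating through the tangential-base-point regularisation, whose only effect is the constant-term contribution. The one place you go beyond the paper is uniqueness, which the paper delegates to Brown's argument for $N=1$; your explicit verification that $V_{k-2}^{\Gamma(N)}=0$ for $k\ge 3$ (using invariance under $T^N$ and $\smat{1&0}{N&1}$) is a correct and self-contained replacement for that citation.
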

\begin{proof}
We have
\beq\bsp
\cG_{k}^{v}(\gamma\tau)\bvert_{\gamma} - \cG_{k}^{v}(\tau) &\,=
P_{G_k^v}\bvert_{\gamma} - P_{G_k^v} + \Re\int_{\vec{1}_\infty}^{\gamma\tau}\rd z\,\ul{G_k^v}(z)\bvert_{\gamma} - \Re\int_{\vec{1}_\infty}^\tau\rd z\,\ul{G_k^v}(z)\\
&\,=\delta\!\left(P_{G_k^v}\right)\!(\gamma) -\Re\Bigg[\int^{\vec{1}_\infty}_{\gamma\tau}\rd z\,\ul{G_k^v}(z)\bvert_{\gamma}-\int^{\vec{1}_\infty}_\tau\rd z\,\ul{G_k^v}(z)\Bigg]\\
&\,=\delta\!\left(P_{G_k^v}\right)\!(\gamma) - \Re C_{G_k^v}(\gamma)\\
&\,=0\,.
\esp\eeq
The fact that $\cG_{k}^{v}(\tau)$ solves eq.~\eqref{eq:cG_DEQ} can be shown by differentiation, taking into account the regularisation. Using eq.~\eqref{eq:regularisation}, we find
\beq\bsp
\partial_{\tau}\cG_{k}^{v}(\tau) &\,= \Re\Bigg\{2\pi i\,\partial_{\tau}\Bigg[\int_0^\tau\rd z\,{a}_0(X-z Y)^{k-2} + \int_{i\infty}^{\tau}\rd z\,\wt{G}_k^v(z)\,(X-z Y)^{k-2} \Bigg]\Bigg\} \\
&\,= \Re\Big[2\pi i \,\Big(a_0 + \wt{G}_k^v(\tau)\Big)(X-\tau Y)^{k-2}\Big]\\
&\, = \Re\Big[2\pi i \,{G_k^v}(\tau)(X-\tau Y)^{k-2}\Big]\,,
\esp\eeq
and so eq.~\eqref{eq:cG_DEQ} holds. Uniqueness can be shown by using exactly the same arguments as for level $N=1$ in ref.~\cite{Brown:2018ut}.
\end{proof}

Theorem~\ref{thm:equivariant_Eisenstein} implies that we may interpret  $\cG_{k}^{v}(\tau)$ as an equivariant primitive of the Eisenstein series $G_k^v$. However, $\cG_{k}^{v}(\tau)$ is not holomorphic, but rather real-analytic. 

$\cG_{k}^{v}(\tau)$ is a polynomial of degree $k-2$ in $(X,Y)$.
We now show that, just like in the case $N=1$ discussed in ref.~\cite{Brown:2018ut}, the coefficients of that polynomial are expressible in terms of non-holomorphic Eisenstein series of weights $(r,s)$ with $r+s=k-2$. A spanning set of non-holomorphic Eisenstein series of weight $(r,s)\neq (0,0)$ for $\Gamma(N)$ is given by (cf.,~e.g.,~ref.~\cite{diamond}),
\beq\label{eq:Grs_def}
	G_{r,s}^v (\tau) = \underset{(m,n)\equiv v}{\left.\sum\right.^\prime} \frac{\Im\tau}{(m\tau+n)^{r+1} (m\bar \tau+n)^{s+1}}\,,
\eeq
where $r,s$ are positive integers and $v\in \Z_N^2$.
Under modular transformations, they behave as
\beq
G_{r,s}^v (\gamma\tau) = (c\tau+d)^r\,(c\bar\tau+d)^s\,G_{r,s}^v (\tau)\,,\qquad \gamma\in \Gamma(N)\,.
\eeq
\begin{thm}\label{thm:main}
For $k\ge 3$, the coefficients of the polynomial $\cG_{k}^{v}(\tau)$ can be expressed in terms of non-holomorphic Eisenstein series:
\beq\label{eq:main}
\cG_{k}^{v}(\tau) = -\frac{2\pi}{k-1}\,\sum_{r+s=k-2}\,G_{r,s}^v(\tau)\,(X-\tau Y)^{r}\,(X-\bar\tau Y)^{s}\,.
\eeq
\end{thm}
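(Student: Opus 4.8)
The plan is to lean on the uniqueness statement of Theorem~\ref{thm:equivariant_Eisenstein}. Since $\cG_k^v$ is characterised there as the \emph{unique} modular equivariant solution of $\rd\cG_k^v = \Re\,\ul{G_k^v}$, it suffices to show that the right-hand side of \eqref{eq:main}, which I will call $F(\tau)$, is (i) modular equivariant for $\Gamma(N)$ and (ii) a solution of the same differential equation; uniqueness then forces $F=\cG_k^v$. The uniqueness I am relying on is genuine because $V_{k-2}^{\Gamma(N)}=0$ for $k\ge 3$: invariance under the unipotent $T^N\in\Gamma(N)$ already forces an invariant polynomial to be a multiple of $Y^{k-2}$, and invariance under $\left(\begin{smallmatrix}1&0\\N&1\end{smallmatrix}\right)$ then forces it to vanish.

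For equivariance I would treat the two ingredients separately. A short calculation using $ad-bc=1$ gives $(X-(\gamma\tau)Y)\bvert_\gamma = (X-\tau Y)/(c\tau+d)$ and likewise $(X-\overline{\gamma\tau}\,Y)\bvert_\gamma = (X-\bar\tau Y)/(c\bar\tau+d)$. Combined with the stated transformation $G_{r,s}^v(\gamma\tau) = (c\tau+d)^r(c\bar\tau+d)^s G_{r,s}^v(\tau)$ for $\gamma\in\Gamma(N)$, the automorphy factors cancel termwise, so each summand $G_{r,s}^v\,(X-\tau Y)^r(X-\bar\tau Y)^s$ is invariant, and hence so is $F$.

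For the differential equation I would first note that $F$ is \emph{real} in the relevant sense: since $\overline{G_{r,s}^v}=G_{s,r}^v$ (swap $\tau\leftrightarrow\bar\tau$ in the lattice sum \eqref{eq:Grs_def}) and conjugation exchanges the two linear factors, relabelling $r\leftrightarrow s$ gives $\bar F=F$. Unwinding the convention $\Re\,\ul{G_k^v} = \pi i\,\big[G_k^v(X-\tau Y)^{k-2}\rd\tau - \overline{G_k^v}(X-\bar\tau Y)^{k-2}\rd\bar\tau\big]$, the $\rd\bar\tau$-equation is the conjugate of the $\rd\tau$-equation, so it is enough to prove $\partial_\tau F = \pi i\,G_k^v(X-\tau Y)^{k-2}$. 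Working in the Wirtinger calculus, I would write $g_{r,s}:=\sum'_{(m,n)\equiv v}(m\tau+n)^{-(r+1)}(m\bar\tau+n)^{-(s+1)}$, so that $G_{r,s}^v=\Im\tau\cdot g_{r,s}$, and use the algebraic identity $m\,\Im\tau=\tfrac{1}{2i}\big[(m\tau+n)-(m\bar\tau+n)\big]$ to obtain $\partial_\tau G_{r,s}^v=-\tfrac{r}{2i}g_{r,s}+\tfrac{r+1}{2i}g_{r+1,s-1}$. Together with $\partial_\tau(X-\tau Y)^r=-rY(X-\tau Y)^{r-1}$ and the relation $Y\,\Im\tau\,P_{r-1,s}=\tfrac{1}{2i}\big(P_{r-1,s+1}-P_{r,s}\big)$, where $P_{r,s}:=(X-\tau Y)^r(X-\bar\tau Y)^s$, the diagonal $P_{r,s}$-contributions cancel and $\partial_\tau F$ reduces to a difference of two sums sharing the common summand $\tfrac{r+1}{2i}g_{r+1,s-1}P_{r,s}$ over shifted ranges of $r$.

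The main obstacle, and the heart of the computation, is the bookkeeping of this reindexing. After the shift $(r,s)\mapsto(r-1,s+1)$ in the second sum, the two sums telescope and every interior term cancels, leaving only the boundary term at $(r,s)=(k-2,0)$. There the index shift produces $g_{k-1,-1}=\sum'_{(m,n)\equiv v}(m\tau+n)^{-k}=G_k^v$, the \emph{holomorphic} Eisenstein series \eqref{eq:GNEisenstein}, times $P_{k-2,0}=(X-\tau Y)^{k-2}$, with overall constant $-\tfrac{2\pi}{k-1}\cdot\tfrac{k-1}{2i}=\pi i$. This gives exactly $\partial_\tau F=\pi i\,G_k^v(X-\tau Y)^{k-2}$. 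Throughout I would keep track of absolute convergence, which is automatic since every $g_{r,s}$ that appears has total weight $r+s+2=k\ge 3>2$, justifying term-by-term differentiation and reindexing. Combining (i) and (ii) with uniqueness then yields $F=\cG_k^v$, which is precisely \eqref{eq:main}.
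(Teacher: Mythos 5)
Your proposal is correct and follows essentially the same route as the paper: establish equivariance of the right-hand side from the modular weights of $G_{r,s}^v$, verify the differential equation $\rd F=\Re\,\ul{G_k^v}$ via the same telescoping computation (yours organised through $g_{r,s}$ and $m\,\Im\tau=\tfrac{1}{2i}\big[(m\tau+n)-(m\bar\tau+n)\big]$, the paper's through the prefactor $(\tau-\bar\tau)$ inside the lattice sum), and conclude by the uniqueness in Theorem~\ref{thm:equivariant_Eisenstein}. The differences are only bookkeeping, plus your explicit check that $V_{k-2}^{\Gamma(N)}=0$ for $k\ge 3$, which the paper delegates to Brown's level-one argument.
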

The proof is similar to the proof for $N=1$ given in ref.~\cite{Brown:2018ut}, and it relies crucially on the following result~\cite{Brown:2018ut,Zemel2013OnQF}:
\begin{proposition}\label{prop:brown7.1}
	Let $f:\HH\to V_{k-2}$ be real analytic. Then it can be written in two equivalent manners: either in the form
	\begin{equation}
		f = \sum_{r+s=k-2} f^{r,s}(\tau)X^rY^s\,,
	\end{equation}
	for some real analytic functions $f^{r,s}:\HH\to\C$, or in the form
	\begin{equation}\label{eq:form2realanafunction}
		f = \sum_{r+s = k-2} f_{r,s}(\tau)(X-\tau Y)^r(X-\bar \tau Y)^s,
	\end{equation}
	where $(\tau-\bar \tau)^{k-2}f_{r,s}:\HH\to\C$ are real analytic. The function $f$ is equivariant,
	\begin{equation}
		f(\gamma\tau)\bvert_\gamma = f(\tau) \quad \text{for all }\quad \gamma\in\SL_2(\Z)\,,
	\end{equation}
	if and only if the coefficients $f_{r,s}$ are modular of weight $(r,s)$.
\end{proposition}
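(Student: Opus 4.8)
The plan is to separate the two assertions: first, the existence and equivalence of the two expansions of $f$, and second, the equivalence between equivariance of $f$ and modularity of the coefficients $f_{r,s}$. The first form is immediate, since the monomials $\{X^rY^s : r+s=k-2\}$ form a fixed (i.e. $\tau$-independent) basis of $V_{k-2}$: any $V_{k-2}$-valued $f$ has unique coefficients $f^{r,s}$, and $f$ is real analytic if and only if every $f^{r,s}$ is. For the second form, the key observation is that on $\HH$ one has $\tau-\bar\tau = 2i\,\Im\tau\neq 0$, so the two linear forms $X-\tau Y$ and $X-\bar\tau Y$ are linearly independent for each fixed $\tau$; hence their degree-$(k-2)$ products $\{(X-\tau Y)^r(X-\bar\tau Y)^s : r+s=k-2\}$ also form a basis of $V_{k-2}$, and $f$ admits unique coefficients $f_{r,s}$.

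To handle real-analyticity I would make the change of basis explicit. Inverting $L_1:=X-\tau Y$, $L_2:=X-\bar\tau Y$ gives $Y=(L_2-L_1)/(\tau-\bar\tau)$ and $X=(\tau L_2-\bar\tau L_1)/(\tau-\bar\tau)$, so substituting these into $X^rY^s$ yields a polynomial in $L_1,L_2$ with coefficients polynomial in $\tau,\bar\tau$, divided by $(\tau-\bar\tau)^{r+s}=(\tau-\bar\tau)^{k-2}$. This exhibits each $f_{r,s}$ as $(\tau-\bar\tau)^{-(k-2)}$ times a real-analytic combination of the $f^{r,s}$, so $(\tau-\bar\tau)^{k-2}f_{r,s}$ is real analytic; conversely the $L_1^rL_2^s$ expand into the monomials with polynomial coefficients and no denominators, giving the reverse implication. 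Since $(\tau-\bar\tau)$ never vanishes on $\HH$, the two notions of real-analyticity in fact coincide, and the factor $(\tau-\bar\tau)^{k-2}$ is merely a convenient normalisation.

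For the equivalence with modularity, the central computation is the transformation of the linear form $X-\tau Y$. For $\gamma=\smat{a&b}{c&d}\in\SL_2(\Z)$, a direct expansion using $\det\gamma=1$ gives
\beq
\big(X-(\gamma\tau)Y\big)\bvert_\gamma = \frac{1}{c\tau+d}\,(X-\tau Y)\,,
\eeq
and, since $\gamma$ has real entries so that $\overline{\gamma\tau}=\gamma\bar\tau$, the conjugate identity
\beq
\big(X-(\overline{\gamma\tau})Y\big)\bvert_\gamma = \frac{1}{c\bar\tau+d}\,(X-\bar\tau Y)\,.
\eeq
Because $\bvert_\gamma$ is substitution, hence an algebra homomorphism, these multiply to
\beq
\big(X-(\gamma\tau)Y\big)^r\big(X-(\overline{\gamma\tau})Y\big)^s\bvert_\gamma = \frac{(X-\tau Y)^r(X-\bar\tau Y)^s}{(c\tau+d)^r(c\bar\tau+d)^s}\,.
\eeq
Applying $\bvert_\gamma$ to $f(\gamma\tau)=\sum_{r+s=k-2}f_{r,s}(\gamma\tau)\,(X-(\gamma\tau)Y)^r(X-(\overline{\gamma\tau})Y)^s$ (the scalar coefficients pass through untouched) and comparing with $f(\tau)$ in the same twisted basis, the basis property lets me equate coefficients term by term. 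Thus $f(\gamma\tau)\bvert_\gamma=f(\tau)$ holds if and only if $f_{r,s}(\gamma\tau)=(c\tau+d)^r(c\bar\tau+d)^sf_{r,s}(\tau)$ for every $(r,s)$, which is exactly the statement that each $f_{r,s}$ is modular of weight $(r,s)$, matching the transformation law recorded above for the $G_{r,s}^v$.

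The proof is essentially linear algebra plus one explicit computation, so there is no deep obstacle; the step demanding the most care is the transformation law for $X-\tau Y$ and its conjugate, where one must use $\det\gamma=1$ and the reality of the entries of $\gamma$, and keep track that $\bvert_\gamma$ acts only on the polynomial variables while the $\tau$-dependent coefficients are inert scalars. A minor point worth stating explicitly is that the argument applies verbatim with $\SL_2(\Z)$ replaced by any subgroup, in particular $\Gamma(N)$, since it uses only $\det\gamma=1$ and real entries.
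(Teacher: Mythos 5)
Your proof is correct, and it coincides with the standard argument: the paper itself states this proposition without proof, citing Brown and Zemel, and the proof in those references is exactly your change of basis between $\{X^rY^s\}$ and $\{(X-\tau Y)^r(X-\bar\tau Y)^s\}$ together with the computation $\big(X-(\gamma\tau)Y\big)\bvert_\gamma = (c\tau+d)^{-1}(X-\tau Y)$ and its complex conjugate. Your closing remark that the argument works verbatim for any subgroup of $\SL_2(\Z)$, in particular $\Gamma(N)$, is also apt, since that is how the proposition is actually used in the proof of Theorem~\ref{thm:main}.
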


\begin{proof}[Proof of Theorem~\ref{thm:main}] Let us denote the right-hand side of eq.~\eqref{eq:main} by $R_k^v(\tau)$. Since the $G_{r,s}^v$ are modular of weights $(r,s)$, it follows from Proposition~\ref{prop:brown7.1} that $R_k^v(\tau)$ is modular equivariant:
\beq
R_k^v(\gamma\tau)\bvert_{\gamma} = R_k^v(\tau)\,,\qquad \gamma\in \Gamma(N)\,.
\eeq

We start by showing that the following identity holds:
	\begin{equation}\label{eq:derivativetau}
		{\partial_\tau}\!\!\!\! \sum_{r+s = k-2} \frac{(\tau -\bar \tau) (X-\tau Y)^r (X-\bar \tau Y)^s}{(m\tau + n)^{r+1} (m\bar \tau +n)^{s+1}} = (k-1)\frac{(X-\tau Y)^{k-2} }{(m\tau +n)^k}.
	\end{equation}
	If we differentiate the expression inside the sum with respect to $\tau$, we obtain
	\begin{equation}
		\frac{(X-\bar\tau Y)^s}{(m\bar \tau +n)^{s+1}} \left( \frac{(X-\tau Y)^r }{(m\tau+n)^{r+1}}+ (\tau - \bar\tau) \frac{\partial}{\partial\tau} \frac{(X-\tau Y)^r}{(m\tau+n)^{r+1}} \right). 
	\end{equation}
The term inside the brackets can be rearranged into:
	\beq\bsp
	(\tau - \bar\tau)& \frac{\partial}{\partial\tau} \frac{(X-\tau Y)^r}{(m\tau+n)^{r+1}}=	(\tau-\bar\tau) \frac{-Y r (X-\tau Y)^{r-1}(m\tau+n) - m(r+1)(X-\tau Y)^r}{(m\tau+n)^{r+2}}\\
	&\,=	-\frac{(\tau-\bar\tau) (X-\tau Y)^{r-1}}{(m\tau + n)^{r+2}} (m(X-\tau Y) + r(mX + nY))\\
	&\,=		-\frac{(X-\tau Y)^{r}}{(m\tau+n)^{r+1}} + (r+1) \frac{(X-\tau Y)^{r}(m\bar\tau + n)}{(m\tau+n)^{r+2}} - r\frac{(X-\tau Y)^{r-1}(X-\bar\tau Y)}{(m\tau+n)^{r+1}}\,,
\esp	\eeq
	where we used the identities:
	\beq\bsp
		(\tau-\bar\tau) m (X-\tau Y) &= (X-\tau Y) ( (m\tau + n)- (m\bar \tau +n))\,,\\
		(\tau -\bar\tau) (mX+nY) &= (m\tau+n)(X-\bar\tau Y) - (m\bar\tau + n)(X-\tau Y)\,.
	\esp\eeq
	If we now differentiate the left-hand side in eq.~\eqref{eq:derivativetau}, we obtain the telescopic sum:
	\begin{equation}
		\begin{split}
			&\frac{\partial}{\partial\tau} \sum_{r+s = k-2} \frac{(\tau -\bar \tau) (X-\tau Y)^r (X-\bar \tau Y)^s}{(m\tau + n)^{r+1} (m\bar \tau +n)^{s+1}}\\
			&= \sum_{r+s=k-2} (r+1)\frac{(X-\tau Y)^r (X-\bar\tau Y)^s}{(m\tau+n)^{r+2} (m\bar\tau +n)^s} - r\frac{(X-\tau Y)^{r-1}(X-\bar\tau Y)^{s+1}}{(m\tau+n)^{r+1}(m\bar\tau +n)^{s+1}}\\
			&= (k-1)\frac{(X-\tau Y)^{k-2} }{(m\tau +n)^k},
		\end{split}
	\end{equation}
	
If we now use eq.~\eqref{eq:derivativetau}, we arrive at
	\begin{equation}
		\partial_{\tau} R_k^v(\tau) = -\frac{2\pi}{2i(k-1)} (k-1) G_k^v(\tau) (X-\tau Y)^{k-2} = \frac{1}{2}\, 2\pi i\, G_k^v(\tau) (X-\tau Y)^{k-2}\,. 
	\end{equation} 
	Analogously, if we differentiate with respect to $\bar \tau$, we obtain		
	\begin{equation}
		\partial_{\bar\tau} R_k^v(\bar\tau) = 
		- \frac{1}{2}\, 2\pi i\, G_k^v(\bar\tau) (X -\bar \tau Y)^{k-2}\,.
	\end{equation}
	We conclude that
	\begin{equation}
		\rd R_k^v(\tau) = \frac{1}{2}\, \ul{G_k^v}(\tau) + \frac{1}{2}\,\overline{\ul{G_k^v}(\tau)} = \Re \ul{G_k^v}(\tau)\,.
	\end{equation}
	Hence, $R_k^v(\tau)$ is a modular equivariant solution to eq.~\eqref{eq:cG_DEQ}, and so $R_k^v(\tau)=\cG_k^v(\tau)$ by uniqueness.
\end{proof}


\section{Eisenstein series of weight two}
\label{sec:weight2}

The proofs of the previous section do not immediately carry over to Eisenstein series of weight 2, because the $G_2^v$ are not modular. In this section we discuss equivariant primitives of Eisenstein series of weight two.

Motivated by Theorem~\ref{thm:equivariant_Eisenstein}, the starting point is the differential equation
\beq\label{eq:dG2}
\rd \mathcal{G}_2^v(\tau) = \Re\ul{g_2^v}(\tau)\,,
\eeq
where $g_2^v$ was defined in eq.~\eqref{eq:g2v_def}. Note that the differential form $\ul{g_2^v}(\tau) = 2\pi i\,g_2^v(\tau)\,\rd\tau$ is invariant under $\Gamma(N)$,
\beq
\ul{g_2^v}(\gamma\tau) = \ul{g_2^v}(\tau)\,,\qquad \gamma\in\Gamma(N)\,.
\eeq
We therefore define
\beq
\mathcal{G}_2^v(\tau) := \Re\int_{\vec{1}_{\infty}}^\tau\ul{g_2^v}(z)\,.
\eeq
This is clearly a solution of eq.~\eqref{eq:dG2}, and all other solutions just differ by a constant. Note that $\mathcal{G}_2^v(\tau)$ is a non-holomorphic modular form of weights $(0,0)$, and thus invariant under $\Gamma(N)$,
\beq
\mathcal{G}_2^v(\gamma\tau) = \mathcal{G}_2^v(\tau)\,,\qquad \gamma\in\Gamma(N)\,.
\eeq
In particular, unlike for $k\ge 3$, modular equivariance does no longer fix the solution uniquely. We now show that $\mathcal{G}_2^v(\tau)$ is a difference of two non-holomorphic Eisenstein series of weights $(0,0)$.

For $(r,s)=(0,0)$, the series in eq.~\eqref{eq:Grs_def} is not absolutely convergent, and $G_{0,0}^v(\tau)$ is not modular invariant. However, just like for holomorphic modular forms, differences of two non-holomorphic modular forms of weights $(0,0)$ are modular invariant. In analogy to eq.~\eqref{eq:g2v_def}, we define (with $v_0=(0,1)$):
\beq
g_{0,0}^v(\tau) =  G_{0,0}^v(\tau) -  G_{0,0}^{v_0}(\tau)
\,.
\eeq
\begin{thm}
\beq
\cG_2^v(\tau) = -2\pi\,g_{0,0}^v(\tau)\,.
\eeq
\end{thm}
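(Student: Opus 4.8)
The plan is to mimic the structure of the proof of Theorem~\ref{thm:main}, adapting it to the weight-two case where the subtleties are (i) the lack of absolute convergence and (ii) the fact that modular invariance alone does not fix the solution. Concretely, I would define $R_2^v(\tau) := -2\pi\,g_{0,0}^v(\tau)$ and show that it satisfies the same defining differential equation~\eqref{eq:dG2} as $\cG_2^v(\tau)$, and that it has the correct behaviour at the distinguished cusp so that the two agree as functions (not merely up to a constant).

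First I would establish the differential relation. Since $g_{0,0}^v = G_{0,0}^v - G_{0,0}^{v_0}$, I compute $\partial_\tau G_{0,0}^v$ and $\partial_{\bar\tau}G_{0,0}^v$ directly from eq.~\eqref{eq:Grs_def} with $(r,s)=(0,0)$. Writing $G_{0,0}^v(\tau) = \sum'_{(m,n)\equiv v}\tfrac{\Im\tau}{(m\tau+n)(m\bar\tau+n)}$ and using $\Im\tau = \tfrac{\tau-\bar\tau}{2i}$, the telescoping identity of eq.~\eqref{eq:derivativetau} degenerates at $k=2$ to the single-term statement
\beq
\partial_\tau\,\frac{(\tau-\bar\tau)}{(m\tau+n)(m\bar\tau+n)} = \frac{1}{(m\tau+n)^2}\,,
\eeq
so that formally $\partial_\tau G_{0,0}^v = \tfrac{1}{2i}G_2^v$ and $\partial_{\bar\tau}G_{0,0}^v = -\tfrac{1}{2i}\,\overline{G_2^v}$. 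Taking the difference to form $g_{0,0}^v$ cancels the $v$-independent divergent pieces and renders these manipulations legitimate, giving $\rd g_{0,0}^v = \tfrac{1}{2i}\big(g_2^v\,\rd\tau - \overline{g_2^v}\,\rd\bar\tau\big)$. Multiplying by $-2\pi$ and comparing with $\Re\ul{g_2^v} = \Re\big(2\pi i\,g_2^v\,\rd\tau\big)$ shows $\rd R_2^v = \Re\ul{g_2^v}$, matching eq.~\eqref{eq:dG2}.

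Since both $R_2^v$ and $\cG_2^v$ solve the same equation and are $\Gamma(N)$-invariant, they differ by a constant, and I must pin this constant to zero. Here I would exploit the boundary condition built into $\cG_2^v$: by its definition $\cG_2^v(\tau) = \Re\int_{\vec 1_\infty}^\tau \ul{g_2^v}(z)$, the regularisation~\eqref{eq:regularisation} forces the regularised limit of $\cG_2^v$ at the cusp $i\infty$ to vanish. I would therefore compute the corresponding regularised asymptotic of $g_{0,0}^v(\tau)$ as $\tau\to i\infty$ — isolating the constant Fourier mode of the Kronecker–Eisenstein series $G_{0,0}^v$ near the cusp — and check that the choice $v_0=(0,1)$ in the definition of $g_{0,0}^v$ makes $R_2^v$ vanish in the same regularised sense, so the additive constant is zero.

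The main obstacle is the convergence/regularisation bookkeeping: the individual series $G_{0,0}^v$ and $G_2^v$ are only conditionally (Eisenstein-)summable, so the formal differentiation and the cusp expansion must be justified via a limiting prescription (e.g.\ Hecke's $\Im(\tau)^s$ regularisation, or working directly with the convergent differences $g_{0,0}^v$, $g_2^v$). I expect that passing to the differences throughout, as the paper already does in eqs.~\eqref{eq:g2v_def} and~\eqref{eq:Hg_convert}, removes the problematic $v$-independent anomalies and makes every step convergent; the consistent choice of the \emph{same} base point $v_0=(0,1)$ on both sides is what ultimately aligns the constants and closes the argument.
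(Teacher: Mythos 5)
Your proposal follows essentially the same route as the paper: verify that $-2\pi\,g_{0,0}^v$ satisfies the differential equation \eqref{eq:dG2} via the $k=2$ degeneration of the telescoping identity \eqref{eq:derivativetau}, conclude the two sides differ by a constant, and kill that constant by matching the $a_0(g_2^v)\,\Im\tau$ asymptotics of both sides at the cusp $i\infty$ (which is exactly how the paper shows $K_v=0$). The computation $\partial_\tau\frac{\tau-\bar\tau}{(m\tau+n)(m\bar\tau+n)}=\frac{1}{(m\tau+n)^2}$ and the normalisation check are correct, so the argument is sound and matches the paper's proof.
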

\begin{proof}
By a computation similar to the one in the proof of Theorem~\ref{thm:main}, we can show that 
\beq
\rd\!\left(-2\pi\,g_{0,0}^v(\tau)\right) = \Re\ul{g_2^v}(\tau)\,.
\eeq
Hence, $-2\pi\,g_{0,0}^v(\tau)$ is a solution of eq.~\eqref{eq:dG2}, and so $K_v := \cG_2^v(\tau) +2\pi\,g_{0,0}^v(\tau)$ is constant in $\tau$. In order to determine $K_{v}$, we study the bahaviour of $\cG_2^v(\tau)$ and $g_{0,0}^v(\tau)$ for large $\Im\tau$.

First, consider $\frac{1}{\Im\tau} g_{0,0}^v(\tau)$. If $\tau$ approaches $i\infty$, all terms $\frac{1}{\vert mz+n\vert^2}$ with $m \neq 0$ vanish. Terms where $m=0$ only exist if $c=0$, where $c$ is the first entry of $v=(c,d)$. Summing up all terms with $m=0$ yields
\beq
\delta_{c,0} \sum_{ n\equiv d} \frac{1}{n^2} - \sum_{n\equiv 1} \frac{1}{n^2} = \delta_{c,0} \zeta^d(2) - \zeta^1(2) = a_0 (g_2^v)\,,
\eeq
where $a_0 (g_2^v)$ denotes the zeroth Fourier coefficient of $g_2^v$ (cf.~eq.~\eqref{eq:Gkv_Fourier}).
Therefore, 
\beq
\lim_{\tau\to{i\infty}} \left( g_{0,0}^v (\tau) -  a_0 (g_2^v)\,\Im\tau \right) = 0\,.
\eeq
This gives
\beq\bsp
\cG_2^v(\tau) &\,= -2\pi \int_{\vec{1}_\infty}^\tau\rd z\, \Im g_2^v(z)\\  &\, = -2\pi \int_{i\infty}^\tau\rd z
\, \Im \big[g_2^v(z) - a_0(g_2^v)\big]  + 2\pi \Im \int_\tau^0\rd z\, a_0(g_2^v) \\
&\, = -2\pi \int_{i\infty}^\tau\rd z
\, \Im \big[g_2^v(z) - a_0(g_2^v)\big]  - 2\pi\,a_0(g_2^v)\, \Im\tau \,,
	\esp\eeq
	and so
	\beq\bsp
	K_v &\,= \lim_{\tau\to i\infty}\left[-2\pi \int_{i\infty}^\tau\rd z
\, \Im \big[g_2^v(z) - a_0(g_2^v)\big]  - 2\pi\,a_0(g_2^v) \,\Im\tau+ 2\pi\,g_{0,0}^v(\tau)\right] \\
&\,= -2\pi\,\lim_{\tau\to i\infty}\int_{i\infty}^\tau\rd z
\, \Im \big[g_2^v(z) - a_0(g_2^v)\big] +2\pi\,\lim_{\tau\to{i\infty}} \left( g_{0,0}^v (\tau) -  a_0 (g_2^v)\,\Im\tau \right)\\
&\,= 0\,.
\esp\eeq
	
\end{proof}

We conclude this section by giving another way to represent the non-holomorphic Eisenstein series of weights $(0,0)$. We limit ourselves again to discuss the principal congruence subgroup, though all results also apply to arbitrary congruence subgroups. 

It is well known (cf.,~e.g., ref.~\cite{diamond}) that the orbifold quotient $Y(N) := \bigslant{\Gamma(N)}{\HH}$ is a (non-compact) Riemann surface. It can be compactified by adding a finite number of points, which correspond to the cusps of $\Gamma(N)$. The first de Rham cohomology group of $Y(N)$ can be described in terms of modular forms. More precisely, there is an isomorphism 
\beq
M_2(\Gamma(N))\oplus \overline{S_2}(\Gamma(N)) \xlongrightarrow{\sim} H^1_{\textrm{dR}}(Y(N),\C)\,.
\eeq
It sends $f\in M_2(\Gamma(N))\oplus \overline{S_2}(\Gamma(N))$ to the cohomology class $[\ul{f}] \in H^1_{\textrm{dR}}(Y(N),\C)$. It follows that $g_{0,0}^v$ can be expressed in terms of abelian integrals in $Y(N)$.

The situation simplifies even further if $Y(N)$ has genus zero and the level of $\Gamma(N)$ is $N>1$. In that case $S_2(\Gamma(N)) = \{0\}$, and $Y(N) \simeq \PP^1(\C)\setminus\Sigma$, where $\Sigma = \Sigma'\cup\{0,\infty\}$ is a finite set of points corresponding to the cusps of $\Gamma(N)$. The function field of $Y(N)$ has a single generator $x(\tau)$, called the \emph{Hauptmodul}, which is a modular function for $\Gamma(N)$. 
Let $v_0=(0,1),v_1,\ldots,v_s,v_{\infty}=(1,0)\in \PP^1(\Q)$ ($s := |\Sigma'|$) be representatives for the cusps of $\Gamma(N)$. For $v_i = (a_i,b_i)$, we set
\beq
x_{v_i} = \lim_{\tau\to \tfrac{b_i}{a_i}}x(\tau)\,.
\eeq
We can always choose the Hauptmodul such that
\beq
x(\tau) = q_N + \mathcal{O}(q_N^2)\,,\qquad 
x_{v_0} = 0\,,\qquad x_{v_{\infty}} = \infty\,.
\eeq

\begin{thm} If $Y(N)$ has genus 0, and using the previous notations and conventions, we have, for $1\le i\le s$, 
\beq\bsp
g_{0,0}^{v_i} &\,= -\frac{\pi}{4N\,\sin^2\tfrac{\pi}{N}}\,\log\left|\frac{x_v-x(\tau)}{x_v\,x(\tau)}\right|^2\,,\\
g_{0,0}^{v_\infty} &\,= \phantom{-}\frac{\pi}{4N\,\sin^2\tfrac{\pi}{N}}\,\log\left|x(\tau)\right|^2\,.
\esp\eeq
\end{thm}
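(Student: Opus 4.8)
The plan is to push everything down to the modular curve. The function $g_{0,0}^{v}$ is a non-holomorphic modular form of weight $(0,0)$ for $\Gamma(N)$, hence $\Gamma(N)$-invariant, and therefore descends to a real-analytic function on $Y(N)\cong\PP^1(\C)\setminus\Sigma$; in genus zero any such function is a function of the Hauptmodul $x=x(\tau)$. By the preceding theorem $\rd g_{0,0}^{v}=-\tfrac{1}{2\pi}\Re\,\ul{g_2^{v}}$, and since $g_2^{v}$ is a weight-two Eisenstein series the form $\ul{g_2^{v}}=2\pi i\,g_2^{v}\,\rd\tau$ descends to a meromorphic differential $\omega_v$ of the third kind on the compact curve $X(N)\cong\PP^1(\C)$, with at most simple poles supported at the cusps. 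Writing $\omega_v=R_v(x)\,\rd x$ with $R_v$ rational and decomposing into partial fractions, the single-valued logarithms in the statement will come from the elementary identity $\Re\!\left(\tfrac{\rho}{x-x_0}\,\rd x\right)=\tfrac{\rho}{2}\,\rd\log|x-x_0|^2$, valid for real residue $\rho$ and \emph{any} $x_0\in\C$.

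The crux is to determine the poles of $\omega_v$ and their residues. Because every cusp of $\Gamma(N)$ has width $N$, the residue of $\ul{g_2^{v}}$ at a cusp equals $N$ times the constant Fourier coefficient of $g_2^v$ there; residues are coordinate-independent, so this is also the residue of $\omega_v$ in the $x$-chart. I would read these constant terms off $g_2^{v}[\sigma]_2=G_2^{v\sigma}-G_2^{v_0\sigma}$ using $a_0\!\left(G_k^{(c,d)}\right)=\delta_{c,0}\,\zeta_k^d$ from eq.~\eqref{eq:Gkv_Fourier}; for the cusps relevant here this isolates exactly the pair $\{v_i,v_0\}$ (with $v_0=(0,1)$ sitting at $x=0$), and since the residues of a third-kind differential on $\PP^1$ sum to zero the two residues are opposite. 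The clean one is at $v_0$, where $a_0(g_2^{v_i})=-\zeta_2^{1}$; the standard evaluation $\zeta_2^{1}=\sum_{n\equiv 1\,(N)}n^{-2}=\tfrac{1}{N^2}\sum_{k\in\Z}\!\left(k+\tfrac1N\right)^{-2}=\tfrac{\pi^2}{N^2\sin^2\tfrac{\pi}{N}}$ then gives residue $-\tfrac{\pi^2}{N\sin^2\tfrac{\pi}{N}}$ at $x=0$ and $+\tfrac{\pi^2}{N\sin^2\tfrac{\pi}{N}}$ at $x_{v_i}$. Hence $\omega_{v_i}=\tfrac{\pi^2}{N\sin^2\tfrac{\pi}{N}}\!\left(\tfrac{1}{x-x_{v_i}}-\tfrac1x\right)\rd x$, while for $v_\infty$ the only finite pole is at $x=0$ and the opposite residue sits at $x_{v_\infty}=\infty$, giving $\omega_{v_\infty}=-\tfrac{\pi^2}{N\sin^2\tfrac{\pi}{N}}\,\tfrac{\rd x}{x}$.

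Taking real parts and integrating now yields $g_{0,0}^{v_i}=-\tfrac{\pi}{4N\sin^2\tfrac{\pi}{N}}\log\!\left|\tfrac{x-x_{v_i}}{x}\right|^2+C$ and $g_{0,0}^{v_\infty}=\tfrac{\pi}{4N\sin^2\tfrac{\pi}{N}}\log|x|^2+C'$, which already have the claimed shape. To fix the constants I would reuse the $\tau\to i\infty$ argument from the proof of the preceding theorem: there $g_{0,0}^{v}(\tau)-a_0(g_2^{v})\,\Im\tau\to 0$, while $x=q_N+\Order{q_N^2}$ gives $\Im\tau=-\tfrac{N}{4\pi}\log|x|^2+o(1)$ as $x\to0$. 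Matching finite parts forces $C'=0$ and $C=\tfrac{\pi}{4N\sin^2\tfrac{\pi}{N}}\log|x_{v_i}|^2$; absorbing $C$ converts $-\log\!\left|\tfrac{x-x_{v_i}}{x}\right|^2+\log|x_{v_i}|^2$ into $-\log\!\left|\tfrac{x_{v_i}-x}{x_{v_i}\,x}\right|^2$, reproducing both stated formulas.

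The main obstacle is the pole/residue bookkeeping of the second paragraph: one must track the action $G_k^{v}[\gamma]_k=G_k^{v\gamma}$ on the index $v$ together with the cusp combinatorics of $\Gamma(N)$ to see which cusps actually support $g_2^{v_i}$, and confirm that $\omega_{v_i}$ is the two-term third-kind differential above. Once the two residues are secured, the remaining manipulations—taking $\Re$, integrating, and fixing the constant—are the short computations indicated.
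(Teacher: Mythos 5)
Your proposal is correct and follows essentially the same route as the paper: identify $\ul{g_2^{v}}$ with a third-kind differential on $\PP^1(\C)$ whose only poles are at the two cusps $[v]$ and $[v_0]$, extract the residue $\pm\frac{\pi^2}{N\sin^2\frac{\pi}{N}}$ from the constant Fourier coefficient $\zeta_2^1$, integrate the resulting $\rd\log$ forms, and fix the additive constant from the asymptotics $g_{0,0}^{v}\sim a_0(g_2^{v})\Im\tau$ at $i\infty$. The only cosmetic difference is that you determine the partial-fraction form from the residue at $x=0$ plus the residue theorem, whereas the paper phrases the same step as matching two bases of $H^1_{\mathrm{dR}}(Y(N),\C)$ and reads off the coefficient from the residue at $[v_\infty]$; the pole-location bookkeeping you flag as the main obstacle is likewise asserted rather than verified in the paper's proof.
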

\begin{proof}
We have two different bases for the first de Rham cohomology group $ H^1_{\textrm{dR}}(Y(N),\C)$, namely the set $\big\{\ul{g_2^{v_\infty}}(\tau)\big\}\cup\big\{\ul{g_2^{v_i}}(\tau): 1\le i\le s\big\}$ and $\big\{\rd\!\log(x(\tau))\big\}\cup\big\{\rd\!\log(x(\tau)-x_{v_i}):1\le i\le s\big\}$. We start by working out the relationship between the two bases.

Note that $\rd\!\log(x(\tau))$ has simple poles at $\tau \in [v_0]$ and $\tau\in [v_{\infty}]$, and $\rd\!\log(x(\tau)-x_{v_i})$ has simple poles at $\tau \in [v_i]$ and $\tau\in [v_{\infty}]$, where $[v]$ is the class of the cusp $v\in\PP^1(\Q)$. Similarly, $\ul{g_{2}^{v_i}}(\tau)$ has simple poles at $\tau\in[v_i]$ and $\tau\in[v_0]$. It follows that we can write
\beq\bsp
\ul{g_{2}^{v_i}}(\tau) - \ul{g_{2}^{v_\infty}}(\tau) &\,= \alpha_i\,\rd\!\log(x(\tau)-x_{v_i})\,,\qquad 1\le i\le s\\
\ul{g_{2}^{v_\infty}}(\tau)&\, =  \alpha_\infty\,\rd\!\log(x(\tau))\,,
\esp\eeq
for some complex constants $\alpha_i$ and $\alpha_{\infty}$. The value of these constants can be determined by computing the residue at $\tau \in [v_{\infty}]$:
\beq
\alpha_1=\ldots=\alpha_s = -\alpha_{\infty} = \Res_{\tau\in [v_{\infty}]}\ul{g_{2}^{v_\infty}}(\tau)\,.
\eeq
We can compute the Fourier expansion of $\ul{g_{2}^{v_\infty}}(\tau)$ close to the cusp $v_{\infty} = (1,0)$. We find
\beq\bsp
\ul{g_{2}^{v_\infty}}(\tau) &\,= \frac{\rd q_N}{q_N}\,N\,{\zeta_2^1} + \ldots= \frac{\rd q_N}{q_N}\frac{\pi^2}{N\,\sin^2\tfrac{\pi}{N}}  + \ldots\,.
\esp\eeq
Hence:
\beq
\Res_{\tau\in [v_{\infty}]}\ul{g_{2}^{v_\infty}}(\tau) = \frac{\pi^2}{N\,\sin^2\tfrac{\pi}{N}}\,,
\eeq
and so we have
\beq\bsp
\ul{g_2^{v_i}}(\tau) - \ul{g_2^{v_\infty}}(\tau) &\,= \frac{\pi^2}{N\,\sin^2\tfrac{\pi}{N}}\,\rd\!\log(x(\tau)-x_{v_i})\,,\\
\ul{g_2^{v_\infty}}(\tau) &\,= -\frac{\pi^2}{N\,\sin^2\tfrac{\pi}{N}}\,\rd\!\log(x(\tau))\,.
\esp\eeq

For $i\neq \infty$, the differential form $\ul{g_2^{v_i}}(\tau) - \ul{g_2^{v_\infty}}(\tau) $ has no pole at $\tau\in[v_0]$, and so
\beq\bsp
-2\pi\,\Big(g_{0,0}^{v_i}(\tau) - g_{0,0}^{v_\infty}(\tau)\Big) &\,=\cG_{2}^{v_i}(\tau) - \cG_{2}^{v_\infty}(\tau) \\ &\,= \Re\int_{i\infty}^{\tau}\Big[\ul{g_2^{v_i}}(z) - \ul{g_2^{v_\infty}}(z) \Big]\\
&\,=\frac{\pi^2}{N\,\sin^2\tfrac{\pi}{N}}\Re\int_{0}^{x(\tau)}\rd\!\log(x(\tau)-x_{v_i})\\
&\,=\frac{\pi^2}{2N\,\sin^2\tfrac{\pi}{N}}\log\left|1-\frac{x(\tau)}{x_{v_i}}\right|^2\,.
\esp\eeq

It remains to show that
\beq
\cG_2^{v_{\infty}}(\tau) = -\frac{\pi^2}{N\,\sin^2\tfrac{\pi}{N}}\,\log\left|x(\tau)\right|^2\,.
\eeq
We first observe that both sides are a solution to the differential equation
\beq
\rd f = \Re\ul{g_2^{v_{\infty}}}(\tau)\,.
\eeq
As a consequence the difference
\beq
\Delta_{\infty} := \cG_2^{v_{\infty}}(\tau) + \frac{\pi^2}{N\,\sin^2\tfrac{\pi}{N}}\,\log\left|x(\tau)\right|^2
\eeq
is a constant. We now study the behaviour of the right-hand side as $\tau\to i\infty$. We have
\beq\bsp
\cG_2^{v_{\infty}}(\tau) &\,= \Re\left[-2\pi i\,a_0(G_2^{v_0})\,\tau + 2\pi i\int_{i\infty}^\tau\left(\ul{g_2^{v_{\infty}}}(z) + \rd z\,a_0(G_2^{v_0})\right)\right]\\
&\,=2\pi\,a_0(G_2^{v_0})\,\Im\tau + \Re\left[ 2\pi i\int_{i\infty}^\tau\left(\ul{g_2^{v_{\infty}}}(z) + \rd z\,a_0(G_2^{v_0})\right)\right]\\
&\,=2\pi\,a_0(G_2^{v_0})\,\Im\tau + \mathcal{O}(q_N)\,,
\esp\eeq
where $a_0(G_2^{v_0}) = \zeta_2^1 = \frac{\pi^2}{N^2\,\sin^2\tfrac{\pi}{N}}$ is the zeroth Fourier coefficient of $G_2^{v_0}$. Similarly, since $x(\tau) = q_N+\mathcal{O}(q_N^2)$, we have
\beq\bsp
\frac{\pi^2}{N\,\sin^2\tfrac{\pi}{N}}\,\log\left|x(\tau)\right|^2&\, = \frac{N}{2}\,a_0(G_2^{v_0})\,\left(\frac{2\pi i \tau}{N}-\frac{2\pi i \bar{\tau}}{N}\right) + \mathcal{O}(q_N)\\
&\,= -{2\pi}\,a_0(G_2^{v_0})\,\Im\tau + \mathcal{O}(q_N)\,,
\esp\eeq
and so we see that $\Delta_{\infty} =0$.

\end{proof}



\section{Conclusions}
\label{sec:conclusion}

In this paper we have taken first steps towards understanding equivariant iterated Eisenstein integrals for congruence subgroups. We have focused on the length one case, and we have obtained a complete description of equivariant primitives of Eisenstein series for the principal congruence subgroups $\Gamma(N)$. The latter are sufficient to obtain result for arbitrary congruence subgroups by taking appropriate linear combinations. Our main result is that the equivariant primitives of Eisenstein series are precisely the corresponding non-holomorphic Eisenstein series, and we have obtained closed formulas expressing the latter in terms of real and imaginary parts of integrals of Eisenstein series. This results extends in a natural way a result by Brown~\cite{Brown:2018ut} to arbitrary congruence subgroups. 

A novel feature of congruence subgroups of level $N>1$ is the appearance of modular Eisenstein series of weight two. We have shown that, in cases where the associated modular curve has genus zero, equivariant primitives of Eisenstein series of weight two can be expressed as single-valued logarithms. At the heart of this result lies that fact that Eisenstein series provide differentials on the modular curve with non-vanishing residues at cusps. Since this is not restricted to genus zero, we expect that our result has a natural extension to higher genera, and that more generally equivariant primitives of Eisenstein series of weight two can be expressed as single-valued combinations of abelian integrals of the third kind on the modular curve.

For the future, it would be interesting to extend our results to obtain first instances of equivariant iterated Eisenstein integrals of higher length, either by extending the results from ref.~\cite{brown_2020,Brown:2018ut}, or by using approaches similar to those developed in the context of string theory~\cite{Dorigoni:2022npe,Dorigoni:2024oft,Dorigoni:2024iyt}. 


\section*{Acknowledgements}
The authors thank Valentin Blomer for discussions.
This work was co-funded by
the European Union (ERC Consolidator Grant LoCoMotive 101043686). Views and opinions expressed are
however those of the author(s) only and do not necessarily reflect those of the European
Union or the European Research Council. Neither the European Union nor the granting
authority can be held responsible for them.

\bibliographystyle{nb}
\bibliography{bib}

\end{document}